\newtheorem{thm}{Theorem}[section]
\newtheorem{Proposition}[thm]{Proposition}
\newtheorem*{Lemma*}{Lemma}
\numberwithin{equation}{section}
\newcommand{\Us}{\mathrm{u}} 
\newcommand{\Ds}{\mathrm{d}}
\newcommand{\Hs}{\mathrm{h}}
\newcommand{\D}{\mathcal{D}} 
\newcommand{\E}{\mathcal{E}} 
\newcommand{\G}{\mathcal{G}} 
\newcommand{\A}{\mathcal{A}} 
\newcommand{\B}{\mathcal{B}} 
\newcommand{\K}{\mathcal{K}} 
\newcommand{\Ll}{\mathcal{L}} 
\title{Equivalence classes of ballot paths modulo strings of length 2 and 3}
\author{K. Manes, A. Sapounakis, I. Tasoulas and P. Tsikouras}
\date{}
\begin{document}
\maketitle

\begin{abstract}
Two paths are equivalent modulo a given string $\tau$,
whenever they have the same length and the positions 
of the occurrences of $\tau$ are the same in both paths.
This equivalence relation was introduced 
for Dyck paths in \cite{BP},
where the number of equivalence classes
was evaluated for any string of length 2.

In this paper, we evaluate the number
of equivalence classes in the set of ballot paths
for any string of length 2 and 3,
as well as in the set of Dyck paths for any string 
of length 3.
\end{abstract}

\section{Introduction}

Throughout this paper, a \emph{path} is considered to be a lattice path on the integer plane, 
consisting of steps $\Us = (1,1)$ (called \emph{rises}) and $\Ds = (1,-1)$ (called \emph{falls}).
Since the sequence of steps of a path is encoded by a word in $\{ \Us, \Ds \}^*$,
we will make no distinction between these two notions.
The \emph{length} $|\alpha|$ of a path $\alpha$ is the number of its steps.
The \emph{height of a point} of a path is its $y$-coordinate.
For any path $\alpha$ and $i  \geq 1$,
we define $\alpha^i = \alpha \alpha \cdots \alpha$ ($i$ times), and $\alpha^0 = \varepsilon$,
where $\varepsilon$ is the empty path.

A \emph{Dyck path} is a path that starts and ends at the same height and lies weakly above this height.
It is convenient to consider that the starting point of a Dyck path is the origin of a pair of axes.

The set of Dyck paths of semilength $n$ is denoted by $\mathcal{D}_n$, and we set
$\mathcal{D} = \bigcup_{n \geq 0} \mathcal{D}_n$, where $\mathcal{D}_0 = \{ \varepsilon \}$.
It is well known that $|\mathcal{D}_n| = C_n$, where $C_n = \frac{1}{n+1} \binom{2n}{n}$
is the $n$-th Catalan number (sequence A000108 in \cite{Sloane}). 
The generating function of the Catalan numbers is denoted by $C(x)$, where  
\[
C(x) = 1+ xC^2(x) = \frac{1-\sqrt{1-4x}}{2x} = \sum_{n \geq 0} C_n x^n.
\]

We will also use the generating function $M(x)$ of the Motzkin numbers (sequence A001006 in \cite{Sloane}), where
\[
M(x) = 1 + xM(x) + x^2M^2(x) = \frac{1-x-\sqrt{1-2x+3x^2}}{2x^2}.
\]


A path which is a prefix of a Dyck path, is called \emph{ballot path}.
The set of ballot paths of length $n$ is denoted by $\mathcal{P}_n$ and 
$\mathcal{P} = \bigcup_{n \geq 0} \mathcal{P}_n$.

A path $\tau \in \{ \Us, \Ds \}^*$, called in this context \emph{string}, \emph{occurs} in a path $\alpha$ if $\alpha = \beta \tau \gamma$, for some $\beta, \gamma \in \{ \Us, \Ds \}^*$.
The number of occurrences of the string $\tau$ in $\alpha$, is denoted by $|\alpha|_\tau$.
In particular, $|\alpha|_\Us$ and $|\alpha|_\Ds$ are the number of rises and falls of $\alpha$ respectively.  
We also use the notation $h(\alpha) = |\alpha|_\Us - |\alpha|_\Ds$, which corresponds to the height 
of the last point of $\alpha$, whenever $\alpha$ starts at height zero.

We say that an occurrence of the string $\tau$ in the path $\alpha$ is at height 
$j \geq 0$,
whenever the minimum height of the points of $\tau$ in this occurrence is equal to $j$.
Furthermore, we say that an occurrence of the string $\tau$ in 
a path $\alpha$ of length $n$ is at position $i \in [n] = \{1,2, \ldots, n\}$,
whenever the first step of this occurrence is the $i$-th step of $\alpha$.

Many articles dealing with the number of occurrences of strings in Dyck paths 
have appeared in the literature
(e.g., \cite{D3, M1, M2, MSV, STT1}). 
More general results on this subject are given in \cite{MSTT1, MSTT2}.

In another direction, taking into account not only the number of occurrences of
the string $\tau$ in a path, but also the positions of these occurrences,
we consider the equivalence relation $\underset{\tau}{\sim}$ on the set of paths
defined by
\[
\alpha \underset{\tau}{\sim} \alpha^\prime  \text{ iff } 
|\alpha| = |\alpha^\prime|
\text{ and the positions of the occurrences of $\tau$ in $\alpha$ and 
$\alpha^\prime$ are the same}.
\]

Recently, Baril and Petrossian in \cite{BP} and \cite{BP2}, 
introduced this equivalence relation on the set of Dyck paths 
and the set of Motzkin paths respectively,
and they evaluated the number of $\tau$-equivalence classes
(i.e., the classes with respect to $\underset{\tau}{\sim}$)
in terms of generating functions for any string $\tau$ of length at most 2.

In this paper, we examine this equivalence relation on the set of ballot paths 
and we evaluate the number of $\tau$-equivalence classes of $\mathcal{P}_n$,
for every string $\tau$ of length 2 or 3,
as well as the number of \emph{Dyck classes} 
(i.e., classes containing at least one Dyck path),
for every string $\tau$ of length 3.

Firstly, for each case of $\tau$, we decompose the paths $p \in \mathcal{P}$ as 
\begin{equation}\label{clusters}
p = a_0 c_1 a_1 c_2 a_2 \cdots c_k a_k, \qquad k \geq 0,
\end{equation}
where the $c_i$'s are the maximal clusters of $\tau$ in $p$ and 
the $a_i$'s are the \emph{$\tau$-components}, 
i.e., the subpaths of $p$ avoiding $\tau$ and lying between these clusters. 
We also use the notation $a(p)=a_k$.
Clearly two ballot paths $p = a_0 c_1 a_1 c_2 a_2 \cdots c_k a_k$ and 
$p^\prime = a_0^\prime c_1^\prime a_1^\prime c_2^\prime a_2^\prime \cdots c_{k^\prime}^\prime a_{k^\prime}^\prime$
are $\tau$-equivalent iff $k = k^\prime$ and $|a_i|= |a_i^\prime|$, $c_i = c_i^\prime$,
for all $i\leq k$.

Next, by assuming several conditions for the $\tau$-components,
we define a set $\A^{(\tau)}_n$ of representatives of length $n$, 
where $\A^{(\tau)} = \bigcup_{n\geq 0} \A_n^{(\tau)}$ and $\A_0^{(\tau)} = \{\varepsilon\}$,
and we prove the following result:
\begin{Proposition}\label{p1}
For every $p \in \mathcal{P}_n$, there exists a unique $p^\prime \in \A^{(\tau)}_n$,
such that $p \underset{\tau}{\sim} p^\prime$, $n \geq 0$.
\end{Proposition}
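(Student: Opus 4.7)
The plan is to exploit the characterisation of $\tau$-equivalence already recorded after (\ref{clusters}): two ballot paths with decompositions $p = a_0 c_1 \cdots c_k a_k$ and $p' = a_0' c_1' \cdots c_{k'}' a_{k'}'$ are equivalent iff $k = k'$, $c_i = c_i'$ and $|a_i|=|a_i'|$ for every $i$. Hence the equivalence class of $p$ is parametrised by the triple $(k,(c_i),(|a_i|))$, while the only freedom within the class lies in the choice of $\tau$-components $a_i$ themselves.

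First I would make precise the role of $\A^{(\tau)}$: the ``several conditions'' imposed on the $\tau$-components are meant to prescribe, for each realisable triple $(k,(c_i),(|a_i|))$, a \emph{canonical} sequence $(a_0^{\ast},\ldots,a_k^{\ast})$ satisfying the three unavoidable structural constraints inherited from the decomposition (\ref{clusters}): (i) each $a_i^{\ast}$ avoids $\tau$; (ii) no occurrence of $\tau$ is created at the junctions $c_i a_i^{\ast}$ and $a_i^{\ast} c_{i+1}$, so that the $c_i$'s stay maximal; (iii) the concatenation $a_0^{\ast} c_1 a_1^{\ast}\cdots c_k a_k^{\ast}$ is a ballot path, i.e.\ has non-negative running height. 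For each $\tau$ of length $2$ or $3$ treated later, the conditions defining $\A^{(\tau)}$ will single out one such canonical representative (for instance, the lexicographically smallest $\tau$-avoiding component of the prescribed length and endpoints, kept as low as possible).

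For existence, starting from $p=a_0 c_1 a_1 \cdots c_k a_k \in \mathcal{P}_n$, I would replace each $a_i$ by the canonical $a_i^{\ast}$ of the same length and argue left-to-right that $p^{\ast}:=a_0^{\ast} c_1 a_1^{\ast}\cdots c_k a_k^{\ast}$ lies in $\A^{(\tau)}_n$: the starting height of $a_i^{\ast}$ is determined by the portion of $p^{\ast}$ preceding it (which coincides in height profile with that of $p$ at this stage), and its ending height equals that of $a_i$ since $h(a_i^{\ast})=h(a_i)$ is forced by the common length and by the subsequent cluster $c_{i+1}$; constraints (i) and (ii) are built into the definition, and (iii) is verified because $p$ itself is a ballot path certifying that non-negative profiles are attainable. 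By construction $p\underset{\tau}{\sim} p^{\ast}$.

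For uniqueness, suppose $p^{\ast},q^{\ast}\in\A^{(\tau)}_n$ with $p^{\ast}\underset{\tau}{\sim} q^{\ast}$. The characterisation above yields the same $k$, the same $(c_i)$ and the same $(|a_i|)$ on both sides; the canonicity condition defining $\A^{(\tau)}$ leaves only one legal $\tau$-component at each slot, so by induction on $i$ we obtain $a_i^{\ast}=a_i^{\ast\ast}$ for every $i$, hence $p^{\ast}=q^{\ast}$. The main obstacle is the existence clause: one must check, case by case for each string $\tau$ of length $2$ or $3$, that the canonical $\tau$-component of the prescribed length, starting height and ending height really exists and fits the junction condition. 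This reduces to exhibiting enough flexibility in the family of $\tau$-avoiding paths with fixed endpoints to include at least one element of the prescribed canonical form; the original $a_i$ always serves as a witness that this family is non-empty.
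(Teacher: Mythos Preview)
Your outline contains a genuine gap in the existence argument. You assert that ``$h(a_i^{\ast})=h(a_i)$ is forced by the common length and by the subsequent cluster $c_{i+1}$'', and you use this to carry the induction ``the height profile of $p^{\ast}$ coincides with that of $p$ at this stage''. This is false. Nothing about the cluster $c_{i+1}$ pins down the height at which it begins; only the parity of $h_i$ is preserved under $\underset{\tau}{\sim}$ (relation~\eqref{parity}), not its value. For a concrete instance take $\tau=\Us\Ds$: the path $p=\Us\,\Us\Ds\,\Ds\,\Us\Ds$ has $a_1=\Ds$, while the representative in $\A^{(\Us\Ds)}$ replaces it by $a_1^{\ast}=\Us$, so $h(a_1^{\ast})=1\neq -1=h(a_1)$ and the heights $h_2,h_2^{\ast}$ differ by $2$. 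Consequently your verification of (iii), ``$p$ itself is a ballot path certifying that non-negative profiles are attainable'', does not go through: a non-negative profile for $p$ says nothing about $p^{\ast}$ once the heights have diverged.

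The paper handles this correctly by \emph{not} assuming the heights agree. In each explicit case ($\tau=\Us\Us$, $\Us\Us\Us$, $\Ds\Us\Us$, and $\Us\Ds\Us$ for Dyck classes) the component $a_i'$ is defined as a function of $|a_i|$ and of the height $h_i'$ \emph{in the representative $p'$ being built}, computed inductively; the ballot condition for $p'$ is then checked directly from the explicit formulas. In the remaining ``obvious'' cases the canonical choice is $a_i'=\Us^{|a_i|}$, so the profile of $p'$ dominates that of $p$ pointwise and non-negativity is immediate---but again this is a specific argument, not the height-matching you invoke. Your uniqueness sketch is fine, but the existence half needs to be rebuilt around the running height $h_i'$ of the representative rather than around the original $h_i$.
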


Finally, by enumerating the set of representatives, 
we evaluate the number of $\tau$-equivalence classes in both cases of ballot and Dyck paths.
As a result, in several cases we obtain some well known sequences,
e.g., the Fibonacci numbers $f_n$ (see sequence A000045 in \cite{Sloane}), 
whereas in other cases we introduce some new ones.

Throughout this paper, we denote by $h_i$, $i \in [k]$, the height of the last point of $c_i$
and we define $h_0=0$ and $h_{k+1}$ to be the height of the last point of $a_k$. 
We can easily check that, for any string $\tau$, if
$p, p^\prime \in \mathcal{P}$, such that $p \sim p^\prime$, and
$p, p^\prime$ are decomposed according to \eqref{clusters}, i.e.,
\[
p = a_0 c_1 a_1 \cdots c_k a_k, \qquad 
p^\prime = a_0^\prime c_1 a_1^\prime \cdots c_k a_k^\prime,
\]
then
\begin{align}\label{parity}
h_{i+1} - h_{i+1}^\prime &= h_i - h_i^\prime +2(|a_i|_\Us - |a_i^\prime|_\Us),
\end{align}
which gives that 
$h_{i}$ and $h_i^\prime$, have the same parity, $0 \leq i \leq k+1$.

Whenever there is no danger of confusion, 
we will write for simplicity $\sim$, $\A$, ``component'' and ``equivalent'' 
instead of $\underset{\tau}{\sim}$, $\A^{(\tau)}$, ``$\tau$-component'' 
and ``$\tau$-equivalent'' respectively.
All sets are denoted by a capital calligraphic letter, 
while their corresponding generating functions with respect to the length
are denoted by the same capital plain letter. 
The bar sign always denotes the intersection of the set with $\D$
(e.g., $\bar{\A} = \A \cap \D$).
Furthermore, for any path $\alpha$, we use the subscript $\alpha$ 
in the notation of a set to denote its subset of paths starting with $\alpha$, 
e.g., $\mathcal{B}_{\alpha} = \{p \in \B: p \text{ starts with } \alpha\}$  
and $B_\alpha$ denotes the corresponding generating function. 
We also use the Iverson notation 
$[P]$, which Proposition $P$ is equal to 1, if $P$ is true, or 0, if $P$ is false. 

There are several different options for the definition of the set
$\A$ of representatives.
In general, we prefer to define $\A$
so that the sequence $(h_i)$ of each representative is
the maximum in its class, because this makes the task of enumeration much easier.
When this is the case, the proof of Proposition \ref{p1} is obvious and it is omitted.
However, when we are also interested in evaluating the number of Dyck classes,
it is more convenient to define the set of representatives so that this sequence
is the minimum in its class (see the cases uuu, uud, duu). 
Then, the set of representatives of Dyck classes
is the set $\bar{\A}$.
Unfortunately, this is not the case for the string udu,
where $\A$ is defined so that this
sequence is the lexicographically minimum in its class
and the representatives of Dyck classes are not always Dyck paths.
This makes this case far more complex than the others.

In sections 2 and 3, we enumerate the $\tau$-equivalence classes of ballot paths
for any string $\tau$ of length 2 and 3 respectively.
In section 4, we enumerate the $\tau$-equivalence classes of Dyck paths
for any string $\tau$ of length 3.

The next two Tables summarize the results of the following sections, 
where the entries in the last column 
refer to the corresponding sequence in \cite{Sloane} (OEIS),
whereas they are left blank whenever this sequence is a new one.
The first three rows of Table \ref{table2}
were given in \cite{BP}.

\begin{table}[ht]
\begin{center}
\begin{tabular}[c]{c||c|c|c|c|c|c|c|c|c|c|c|c|c|}
\hline
$\tau \ \backslash \ n$ & 1 & 2 & 3 & 4 & 5 & 6 & 7 & 8 & 9 & 10 & 11 & 12 & OEIS \\ \hline \hline
du & 1 & 1 & 2 & 3 & 5 & 8 & 13 & 21 & 34 & 55 & 89 & 144 &  A000045 \\ \hline
ud & 1 & 2 & 3 & 5 & 8 & 13 & 21 & 34 & 55 & 89 & 144 & 233 &  du shifted \\ \hline
uu & 1 & 2 & 3 & 5 & 8 & 14 & 24 & 42 & 73 & 128 & 224 & 393 & \\ \hline
dd & 1 & 1 & 1 & 2 & 3 & 5 & 7 & 12 & 18 & 31 & 47 & 81 & A191385 \\ \hline
uuu & 1 & 1 & 2 & 3 & 5 & 8 & 13 & 21 & 34 & 55 & 89 & 144 &  A000045 \\ \hline
ddd & 1 & 1 & 1 & 1 & 1 & 2 & 3 & 5 & 7 & 10 & 13 & 20 &   \\ \hline
uud & 1 & 1 & 2 & 3 & 4 & 6 & 9 & 13 & 19 & 28 & 41 & 60 & A000930  \\ \hline
duu & 1 & 1 & 1 & 2 & 3 & 4 & 6 & 9 & 13 & 19 & 28 & 41 &  uud shifted\\ \hline
udd & 1 & 1 & 1 & 2 & 3 & 4 & 5 & 8 & 12 & 17 & 23 & 35 &   \\ \hline
ddu & 1 & 1 & 1 & 1 & 2 & 3 & 4 & 5 & 8 & 12 & 17 & 23 &  udd shifted \\ \hline
udu & 1 & 1 & 2 & 3 & 5 & 8 & 13 & 21 & 34 & 55 & 89 & 144 &  A000045 \\ \hline
dud & 1 & 1 & 1 & 2 & 3 & 5 & 7 & 11 & 16 & 26 & 39 & 63 &   \\ \hline
\end{tabular}
\caption{Number of equivalence classes of ballot paths of length $n$ for various strings.}
\label{table1}
\end{center}
\end{table}

\begin{table}[ht]
\begin{center}
\begin{tabular}[c]{c||c|c|c|c|c|c|c|c|c|c|c|c|c|}
\hline
$\tau \ \backslash \ n$ & 1 & 2 & 3 & 4 & 5 & 6 & 7 & 8 & 9 & 10 & 11 & 12 & OEIS \\ \hline \hline
uu, dd & 1 & 2 & 4 & 9 & 22 & 56 & 147 & 393 & 1065 & 2915 & 8042 & 22330 & A244886 \\ \hline
ud & 1 & 2 & 5 & 14 & 41 & 121 & 354 & 1021 & 2901 & 8130 & 22513 & 61713 & A244885 \\ \hline
du & 1 & 2 & 5 & 13 & 34 & 89 & 233 & 610 & 1597 & 4181 & 10946 & 28657 &  A001519 \\ \hline
uuu, ddd & 1 & 1 & 2 & 4 & 8 & 17 & 37 & 81 & 180 & 405 & 917 & 2090 &   \\ \hline
uud, udd & 1 & 2 & 4 & 8 & 17 & 35 & 75 & 157 & 337 & 712 & 1529 & 3248 &   \\ \hline
duu, ddu & 1 & 1 & 2 & 4 & 8 & 17 & 35 & 75 & 157 & 337 & 712 & 1529 &  uud shifted\\ \hline
udu, dud & 1 & 2 & 4 & 9 & 22 & 54 & 134 & 335 & 843 & 2132 & 5409 & 13761 &   \\ \hline
\end{tabular}
\caption{Number of equivalence classes of Dyck paths of semilength $n$ for various strings.}
\label{table2}
\end{center}
\end{table}

\section{Strings of length 2}
\subsection{The strings ud and du}
Every $p \in \mathcal{P}$ is uniquely decomposed according to \eqref{clusters} as
$p = a_0 \Us\Ds a_1 \Us\Ds a_2 \cdots \Us\Ds a_k$.

Similarly, every $p$ is uniquely decomposed as
$p = a_0 \Ds\Us a_1 \Ds\Us a_2 \cdots \Ds\Us a_k$.

Let $\A^{(\Us\Ds)}$ (resp. $\A^{(\Ds\Us)}$) be the set of ballot paths, 
the components of which have the form
$a_i = \Us^s, \quad s \geq 0$.

Equivalently, $\A^{(\Us\Ds)}$ (resp. $\A^{(\Ds\Us)}$) is the set of ballot paths avoiding dd
(resp. avoiding dd and ending with u).
Hence, by deleting the last rise of every path in $\A_{n+1}^{(\Ds\Us)}$,
we obtain bijectively every path in $\A_{n}^{(\Us\Ds)}$, so that
\begin{equation}\label{eq1uddu}
|\A_{n+1}^{(\Ds\Us)}| = |\A_{n}^{(\Us\Ds)}|.
\end{equation}

Then, we have the following result:
\begin{Proposition}\label{puddu}
The number of $\Us\Ds$ (resp. $\Ds\Us$)-equivalence
classes of ballot paths of length $n \geq 1$ is equal to $f_{n+1}$
(resp. $f_n$).
\end{Proposition}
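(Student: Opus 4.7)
The plan is to invoke Proposition~\ref{p1} and reduce the claim to a one-line Fibonacci recurrence. By Proposition~\ref{p1} applied to $\tau=\Us\Ds$, the number of $\Us\Ds$-equivalence classes of $\mathcal{P}_n$ equals $|\A_n^{(\Us\Ds)}|$, and the text has already identified $\A_n^{(\Us\Ds)}$ with the set of ballot paths of length $n$ avoiding $\Ds\Ds$. Writing $g_n := |\A_n^{(\Us\Ds)}|$, the task becomes proving $g_n = f_{n+1}$.

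The initial values are $g_0=1$ (the empty path) and $g_1=1$ (only $\Us$ is a ballot path of length $1$). For $n\geq 2$ I would split $p\in \A_n^{(\Us\Ds)}$ by its last step. If $p$ ends in $\Us$, deleting that step gives a bijection onto $\A_{n-1}^{(\Us\Ds)}$: the prefix is automatically a ballot path avoiding $\Ds\Ds$, and conversely appending $\Us$ to any path in $\A_{n-1}^{(\Us\Ds)}$ cannot create a $\Ds\Ds$ since the new step is not a $\Ds$. If $p$ ends in $\Ds$, then avoiding $\Ds\Ds$ forces the penultimate step to be $\Us$, so deleting the trailing $\Us\Ds$ gives a bijection onto $\A_{n-2}^{(\Us\Ds)}$; conversely, appending $\Us\Ds$ to any path in $\A_{n-2}^{(\Us\Ds)}$ yields a ballot path (the height returns to its original value after the $\Us\Ds$), and introduces no new $\Ds\Ds$ because the new $\Us$ separates the appended $\Ds$ from the original last step. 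Hence $g_n = g_{n-1}+g_{n-2}$, which combined with the initial values yields $g_n = f_{n+1}$.

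The $\Ds\Us$ case is immediate: Proposition~\ref{p1} and \eqref{eq1uddu} give the number of $\Ds\Us$-equivalence classes of $\mathcal{P}_n$ as $|\A_n^{(\Ds\Us)}|=|\A_{n-1}^{(\Us\Ds)}|=f_n$. There is no genuine obstacle; the only care needed is checking that the inverse maps (appending $\Us$, respectively appending $\Us\Ds$) stay inside $\A^{(\Us\Ds)}$, which is immediate from the nature of the appended step(s).
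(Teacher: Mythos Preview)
Your proof is correct and is essentially the same argument as the paper's: splitting $p\in\A_{n}^{(\Us\Ds)}$ according to whether its last step is $\Us$ or $\Ds$ is exactly the paper's dichotomy $a(p)\neq\varepsilon$ versus $a(p)=\varepsilon$, leading to the same Fibonacci recurrence and the same appeal to \eqref{eq1uddu} for the $\Ds\Us$ case. The only cosmetic difference is that you anchor the recursion at $g_0=g_1=1$ rather than at $|\A_1^{(\Us\Ds)}|=1$, $|\A_2^{(\Us\Ds)}|=2$.
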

\begin{proof}
Clearly, $|\A^{(\Us\Ds)}_1|=1$ and $|\A^{(\Us\Ds)}_2|=2$.
Furthermore, from each path $p \in \A^{(\Us\Ds)}_{n+2}$, 
by deleting its last ud, if $a(p) = \varepsilon$, 
or its last step, if $a(p) \neq \varepsilon$, 
we obtain respectively a path $q_1 \in \A^{(\Us\Ds)}_n$ or a path $q_2 \in \A^{(\Us\Ds)}_{n+1}$.
Since this procedure is clearly reversible, we obtain that
$|\A^{(\Us\Ds)}_{n+2}|= |\A^{(\Us\Ds)}_{n+1}| + |\A^{(\Us\Ds)}_{n}|$, $n \geq 1$. 

The result for the $\Ds\Us$-equivalence classes then follows from relation \eqref{eq1uddu}. 
\end{proof}

\subsection{The string uu}
Every $p \in \mathcal{P}$ is uniquely decomposed according to \eqref{clusters} as
$p = a_0 \Us^{r_1} a_1 \Us^{r_2} a_2 \cdots \Us^{r_k} a_k$, $r_i \geq 2$, $i \in [k]$.
Notice that $a_0 = (\Us\Ds)^t$, $t \geq 0$, 
whereas $a_0$ can also be equal to $(\Us\Ds)^t \Us$, $t \geq0$, for $k=0$.

Let $\A$ be the set of ballot paths, 
the components of which have either one of the following forms:
\[
a_i = \Ds^s, \qquad 
a_i = \Ds^{h_i} (\Us\Ds)^t, \qquad
a_i = \Ds^{h_i-1} (\Us\Ds)^t,
\]
where $i,s,t \geq 1$, and $a_k$ can also be empty.

Equivalently, $\A$ is the set of ballot paths $p$
avoiding $\Ds\Us\Ds\Ds$, avoiding $\Ds\Us\Ds$ at height greater than 1,
and not ending with $\Ds\Us$, unless $p = (\Us\Ds)^{t}\Us$, $t \geq 1$.

\begin{proof}[Proof of Proposition \ref{p1} for $\tau = \Us \Us$.]

Given a ballot path $p = a_0 \Us^{r_1} a_1 \Us^{r_2} a_2 \cdots \Us^{r_k} a_k$, 
setting $h^\prime_0 = 0$, 
$a_0^\prime = a_0$ and, for $i \in [k]$, 
$a_i^\prime = \Ds^{s_i} (\Ds\Us)^{t_i}$
and $h_{i+1}^\prime = h_i^\prime + h(a_i^\prime)$, where
\[
s_i = \begin{cases}
|a_i|, &  |a_i| \leq h_i^\prime, \\
h_i^\prime, & |a_i|-h_i^\prime >0 \text{ is even}, \\
h_i^\prime -1, & |a_i|-h_i^\prime > 0 \text{ is odd}, 
\end{cases}
\]
and $2t_i = |a_i|-s_i$,
we obtain inductively a sequence of paths $a_i^\prime$.
Let
$p^\prime = a_0^\prime \Us^{r_1} a_1^\prime \Us^{r_2} a_2^\prime \cdots \Us^{r_k} a_k^\prime$.
It is easy to check that $p^\prime \in \A$ and $p^\prime \sim p$.
Furthermore, since no two paths of $\A$ are equivalent,
we obtain the required result.
\end{proof}

\begin{Proposition}\label{puu}
The number of $\Us\Us$-equivalence classes of ballot paths of length $n$ is equal to 
the $n$-th coefficient of the generating function
\[
A(x) = \frac{1 - x - x^4}{1 - 2 x + x^3 - x^4 + x^5}.
\]
\end{Proposition}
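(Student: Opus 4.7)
The plan is to enumerate $\A$ directly from its structural description. Each $p\in\A$ has the cluster form $a_0\Us^{r_1}a_1\cdots\Us^{r_k}a_k$ with $r_i\ge 2$, and the components $a_i$ are drawn from the three short families parametrized by $h_i$. I will first split $\A$ according to whether $k=0$---contributing the simple piece $\tfrac{1}{1-x}$ from the paths $(\Us\Ds)^t$ and $(\Us\Ds)^t\Us$---or $k\ge 1$. In the latter case $a_0$ is forced to be $(\Us\Ds)^{t_0}$ (since it cannot end in $\Us$), giving a factor $\tfrac{1}{1-x^2}$ before the first cluster $\Us^{r_1}$ carries the path to some height $h_1\ge 2$.

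For each $h\ge 2$ I will introduce the ``tail'' generating function $E_h(x)$ counting the remaining suffix $a_i\Us^{r_{i+1}}\cdots a_k$ starting just after a cluster arriving at height $h$, and set up the natural recursion
\[
E_h(x) = T_h(x) + \sum_{h'\ge 0}W(h,h';x)\sum_{r\ge 2}x^r E_{h'+r}(x),
\]
where $T_h(x)$ is the generating function for the terminal component $a_k$ (one of $\varepsilon$, $\Ds^s$, $\Ds^h(\Us\Ds)^t$ or $\Ds^{h-1}(\Us\Ds)^t$) and $W(h,h';x)$ is the generating function for a non-terminal component going from height $h$ to $h'$. Two short cancellations, both exploiting $1-x^2=(1-x)(1+x)$, drive the whole argument: a direct telescoping gives $T_h(x)=\tfrac{1}{1-x}$ for every $h\ge 1$, and the explicit formulas $W(h,0;x)=\tfrac{x^h}{1-x^2}$, $W(h,1;x)=\tfrac{x^{h-1}}{1-x^2}$, and $W(h,h';x)=x^{h-h'}$ for $2\le h'\le h-1$ sum to the striking identity $\sum_{h'\ge 0}W(h,h';x)=\tfrac{x}{1-x}$ for every $h\ge 2$, independent of $h$.

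These two identities together invite the ansatz that $E_h(x)-T_h(x)$ is itself independent of $h$; setting $E_h(x)=\tfrac{1}{1-x}+V(x)$ and substituting collapses the whole infinite system to the single scalar equation $V=\tfrac{x^3}{(1-x)^3}+\tfrac{x^3}{(1-x)^2}V$, giving $V=\tfrac{x^3}{(1-x)[(1-x)^2-x^3]}$. Uniqueness of the formal power series solution of the recursion (each iteration strictly raises the lowest degree of $E_h-T_h$) guarantees that this height-independent solution is the genuine one. Assembling the pieces yields
\[
A(x)=\frac{1}{1-x}+\frac{x^2}{(1-x^2)(1-x)}\left(\frac{1}{1-x}+V(x)\right),
\]
which---using the pleasant coincidence $(1-x)^2-x^3+x^3=(1-x)^2$ to cancel the extra $(1-x)^2$ from numerator and denominator---collapses to $\tfrac{1-x-x^4}{1-2x+x^3-x^4+x^5}$ as claimed. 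The only genuinely non-obvious step is spotting that the tail generating function is height-independent; once this is in hand, the main remaining task is careful bookkeeping of the two ``boundary'' transitions to heights $h'\in\{0,1\}$, which are the source of the two $\tfrac{1}{1-x^2}$ factors in $W$ and would otherwise break the clean geometric structure.
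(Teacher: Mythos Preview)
Your argument is correct and genuinely different from the paper's. The paper enumerates $\A$ by introducing two auxiliary sets $\B$ and $\G$ (paths in $\A$ additionally avoiding $\Ds\Us\Ds$ at positive height, respectively everywhere), derives a system of relations among $A$, $\bar A$, $\bar B$, $G$, $\bar G$ via last-return-type decompositions, and closes the system using the known identity $\bar G = 1 + x^2 M(x^2)$ for Motzkin numbers. You instead work directly from the cluster decomposition with a height-indexed tail function $E_h$, and the whole proof hinges on the two telescoping identities $T_h(x)=\tfrac{1}{1-x}$ and $\sum_{h'} W(h,h';x)=\tfrac{x}{1-x}$, which make $E_h$ height-independent and collapse the infinite system to one linear equation. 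Your route is more elementary and self-contained---no appeal to Motzkin numbers or to Dyck-path decompositions---and it isolates the combinatorial reason the answer is rational (the transition weights out of height $h$ are blind to $h$). The paper's route, on the other hand, ties the problem to familiar pattern-avoidance classes and reuses machinery that generalises to the Dyck-class enumeration in Section~4. Your uniqueness justification (the map $E_h\mapsto T_h+\sum W\cdot x^r E_{h'+r}$ raises the $x$-valuation, so the fixed point in formal power series is unique) is the right way to certify that the height-independent ansatz is the actual solution and not merely \emph{a} solution.
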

\begin{proof}
For the enumeration of the set $\A$, we define the sets 
\[
\B = \{p \in \A: p \text{ avoids dud at height greater than zero}\}
\]
and
\[
\G = \{p \in \A: p \text{ avoids dud}\}.
\]

We first consider the following decomposition of a path $p \in \A \setminus \bar{\A}$:
\[
p = (\Us\Ds)^i\Us, i \geq 0, \quad \text{or} \quad 
p = \alpha \Us \beta,
\quad \text{or} \quad 
p = \alpha \Us \Us, \quad \text{or} \quad 
p = \alpha\Us \beta^\prime \Us q,
\]
where $\alpha \in \bar{\A}$, $\beta \in \bar{\B} \setminus \{\varepsilon\}$,
$\beta^\prime \in \bar{\B}$ and $q \in \G \setminus \{\varepsilon, \Us\Ds\Us\}$.
It follows that
\[
A -\bar{A} = \frac{x}{1-x^2} + x\bar{A}(\bar{B}-1) + x^2 \bar{A} + x^2 \bar{A}\bar{B}(G-1-x^3), 
\]
so that
\begin{equation}\label{uuA}
A = \frac{x}{1-x^2} + \bar{A} \left( 1 + x(\bar{B}-1) + x^2 + x^2\bar{B}(G-1-x^3) \right).
\end{equation}

Next, by considering the decomposition 
$\alpha = \beta \Us \gamma \Ds$ of a path
$\alpha  \in \bar{\A} \setminus \{\varepsilon\}$,
where $\beta \in \bar{A}$, $\gamma \in \bar{\B}$ and $\gamma$ does not end with dud,
we obtain that
$\bar{A}-1 = x^2\bar{A}(\bar{B}-x^2(\bar{B}-1))$,
which gives 
\begin{equation}\label{uuAbar} 
\bar{A} = \frac{1}{1-x^2(x^2 + \bar{B}-x^2\bar{B})} 
= \frac{1}{1-x^4- x^2(1-x^2)\bar{B}}, 
\end{equation}
while, by considering the same decomposition for the set 
$\bar{\B} \setminus \{\varepsilon\}$,
where $\beta \in \bar{B}$, $\gamma \in \bar{\G}$,
we obtain that
$\bar{B}-1 = x^2\bar{B}\bar{G}$,
which gives 
\begin{equation}\label{uuBbar}
\bar{B} = \frac{1}{1-x^2\bar{G}}.
\end{equation}

Furthermore, by considering the following decomposition of a path $q \in \G \setminus \bar{\G}$:
\[
q = \Us r, \quad \text{or} \quad 
q = \Us\Ds\Us r, \quad \text{or} \quad 
q = \gamma \Us r^\prime,
\]
where $r \in \G \setminus \{ \Us\Ds\Us \}$, 
$\gamma \in \bar{\G} \setminus \{\varepsilon, \Us\Ds\}$ and $r^\prime \in \G \setminus \{\varepsilon,\Us\Ds\Us\}$,
we deduce that
\[
G - \bar{G} = (x+x^3)(G - x^3) + x(\bar{G} -1- x^2)(G -1- x^3),
\]
which gives
\begin{equation}\label{uuG}
G -1- x^3 = \frac{\bar{G}-1+x}{1-x\bar{G}}.
\end{equation}

Finally, since $\bar{G} = 1+ x^2 M(x^2)$ (see \cite{Sun}),
using relations \eqref{uuA}, \eqref{uuAbar}, \eqref{uuBbar} and \eqref{uuG}, 
after some simple calculations, we obtain the required result.
\end{proof}

\subsection{The string dd}
Every $p \in \mathcal{P}$ is uniquely decomposed according to \eqref{clusters} as
$p = a_0 \Ds^{r_1} a_1 \Ds^{r_2} a_2 \cdots \Ds^{r_k} a_k$, $r_i \geq 2$, $i\in [k]$.

Let $\A$ be the set of ballot paths, the nonempty components of which have the form
$a_i = \Us^s$, $s \geq 1$,
whereas $a_k$ can also be empty.

Equivalently, $\A$ is the set of ballot paths
avoiding $\Us\Ds\Us$ and not ending with $\Us\Ds$.

\begin{Proposition}\label{pdd}
The number of $\Ds\Ds$-equivalence classes of ballot paths of length $n$ is equal to 
the $n$-th coefficient of the generating function
\[
A(x) 
= \frac{1+x^2M(x^2)}{1-x+x^2-x^3M(x^2)}.
\]
\end{Proposition}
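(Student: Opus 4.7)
The plan is to express $A$ in terms of $\bar{A}$ via a last-return decomposition, and then reduce $\bar{A}$ to the generating function of Dyck paths avoiding only the forbidden factor $\Us\Ds\Us$, which is classical.

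For the first step, take $p\in\A$ and consider the last point where its graph meets the $x$-axis. If that point is the endpoint, $p\in\bar{\A}$. Otherwise, write $p=p_1\Us q$, where $p_1$ is the Dyck prefix up to the last return and $q$ is the remaining tail (sitting at height $\geq 1$); let $q'$ be the ballot path obtained by translating $q$ down by one. The key observation is that $p_1$ cannot end with $\Us\Ds$ (otherwise $\Us\Ds\Us$ appears at the junction $p_1\Us$), so $p_1\in\bar{\A}$; since shifting does not alter the word, the two defining conditions of $\A$ transfer between $p$ and $q'$, forcing $q'\in\A$. Conversely, any pair $(p_1,q')\in\bar{\A}\times\A$ (including $q'=\varepsilon$, yielding $p=p_1\Us$) rebuilds a valid $p\in\A\setminus\bar{\A}$: nonempty $p_1\in\bar{\A}$ ends in $\Ds\Ds$ and $q$ starts with $\Us$ (since $q'$ does), so the junction is safe. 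This yields $A=\bar{A}+x\bar{A}A$, i.e., $A=\dfrac{\bar{A}}{1-x\bar{A}}$.

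Next let $\B=\{q\in\D:q\text{ avoids }\Us\Ds\Us\}$. Every $q\in\B$ either lies in $\bar{\A}$ or ends with $\Us\Ds$, in which case $q=q'\Us\Ds$ with $q'\in\bar{\A}$ (if $q'$ ended with $\Us\Ds$, then $q$ would contain the forbidden $\Us\Ds\Us\Ds$). Thus $B=(1+x^2)\bar{A}$. The first-return decomposition $q=\Us q_1\Ds q_2$ applied to $\B$ then gives $B=1+x^2+x^2(B-1)B$: if $q_1=\varepsilon$, avoiding $\Us\Ds\Us$ forces $q_2=\varepsilon$, and if $q_1\neq\varepsilon$ then $q_1$ ends with $\Ds$ so nothing crosses the inner boundaries. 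Setting $B=1+x^2M(x^2)$ reduces this to the defining equation of $M$ at $y=x^2$, confirming that indeed $B=1+x^2M(x^2)$.

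Combining these pieces,
\[
A=\frac{\bar{A}}{1-x\bar{A}}=\frac{B}{1+x^2-xB}=\frac{1+x^2M(x^2)}{1-x+x^2-x^3M(x^2)}.
\]
The main obstacle lies in the careful verification in the first step that the defining conditions of $\A$ (pattern avoidance and non-ending with $\Us\Ds$) transfer correctly across the junction $p_1\Us q$, and in particular that $p_1$ is forced into $\bar{\A}$ rather than merely into the set of Dyck paths avoiding $\Us\Ds\Us$; the remaining manipulations are routine.
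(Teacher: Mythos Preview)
Your argument is correct and follows essentially the same route as the paper: the identity $A=\bar A/(1-x\bar A)$ from the last-return decomposition $p=\alpha\Us q$, together with $(1+x^2)\bar A = 1+x^2M(x^2)$ obtained by relating $\bar\A$ to the set of Dyck paths avoiding $\Us\Ds\Us$. The only difference is that you supply a self-contained derivation of $B=1+x^2M(x^2)$ via the first-return decomposition, whereas the paper cites this fact; and your phrase ``the forbidden $\Us\Ds\Us\Ds$'' should read ``the forbidden $\Us\Ds\Us$'' (the 4-letter string merely contains it).
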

\begin{proof}
A path $p \in \A \setminus \bar{\A}$ is decomposed as
$p = \alpha \Us q$, where $\alpha \in \bar{\A}$, $q \in \A$.
Hence,
$A = \bar{A} + x\bar{A}A$,
so that
\begin{equation}\label{ddA}
A = \frac{\bar{A}}{1-x\bar{A}}. 
\end{equation}

Clearly, every path $\beta \in \D$ avoiding udu 
either belongs to $\bar{\A}$ or it has the form
$\beta = \alpha \Us\Ds$, $\alpha \in \bar{\A}$.
Then, since Dyck paths avoiding udu are enumerated by 
$1+x^2M(x^2)$ (see \cite{Sun}), we have that
\begin{equation}\label{ddAbar}
1+x^2M(x^2) = \bar{A} + x^2 \bar{A}.
\end{equation} 

From relations \eqref{ddA} and \eqref{ddAbar}, we obtain the required result.
\end{proof}

\paragraph{Remark:}
The coefficients of $A(x)$ form the sequence A191385 in \cite{Sloane},
where a different combinatorial interpretation is given:
$[x^n]A(x)$ is the number of Motzkin paths of length $n$ with no horizontal steps $\Hs$ at positive height 
and no ascents of length 1.

More generally, the number of $\Ds^\mu$-equivalence classes, $\mu \geq 2$,
of ballot paths of length $n$ is equal to the 
number of 
Motzkin paths of length $n$ with no h at positive
height and no ascents of length less than $\mu$. 
To verify this, we consider the mapping
$\phi$ between the set of ballot paths and the set of Motzkin paths 
with no h at positive height, defined recursively as follows:
\[
\phi (\varepsilon) = \varepsilon, \qquad
\phi (\Us p) = \Hs \phi(p), \qquad
\phi (\Us \alpha \Ds p) = \Us r(\alpha) \Ds \phi(p),
\]
where $p \in \mathcal{P}$, $\alpha \in \D$ and 
$r(\alpha)$ is the reversed path of $\alpha$
(i.e., its symmetrical path with respect to a vertical axis).
It is easy to check that $\phi$ is a length preserving bijection.
Furthermore, since 
$\A^{(\Ds^\mu)}$ is the set of ballot paths avoiding 
$\Us\Ds^i\Us$, $i \in [\mu-1]$, ending with u or $\Ds^\mu$,
we can easily deduce that 
$\phi \left(\A^{(\Ds^\mu)} \right)$ is equal to the set of
Motzkin paths of length $n$ with no h at positive
height and no ascents of length less than $\mu$, giving the required result.

\section{Strings of length 3}
\subsection{The string uuu}\label{uuu}
Every $p \in \mathcal{P}$ is uniquely decomposed according to \eqref{clusters} as
$p = a_0 \Us^{r_1} a_1 \Us^{r_2} a_2 \cdots \Us^{r_k} a_k$, $r_i \geq 3$, $i\in [k]$.

Let $\A$ be the set of ballot paths, with their components $a_i$, $i\in [k]$,
having either one of the following forms:
\[
a_i = \Ds^s, \qquad
a_i = \Ds^{h_i} (\Us\Ds)^t, \qquad 
a_i = \Ds^{h_i-1} (\Us\Ds)^t, 
\]
where $s,t \geq 1$,
while $a_k$ can also be empty and $a_0 \in \{\varepsilon, (\Us\Ds)^t, \Us(\Us\Ds)^t: t \geq 1\}$.

Equivalently, $\A$ is the set of ballot paths $p$
not starting with uudd, avoiding dudd and duud, avoiding dud at height greater than 1,
and ending with uuu or with d, unless $p = \Us$.

\begin{proof}[Proof of Proposition \ref{p1} for $\tau = \Us \Us \Us$.]
Given a ballot path $p = a_0 \Us^{r_1} a_1 \Us^{r_2} a_2 \cdots \Us^{r_k} a_k$, 
setting $h^\prime_0 = 0$, 
$a_0^\prime = (\Us \Ds)^{|a_0|/2}$ if $|a_0|$ is even 
or $a_0^\prime = \Us(\Us \Ds)^{(|a_0|-1)/2}$ if $|a_0|$ is odd,
$a_i^\prime = \Ds^{s_i} (\Us\Ds)^{t_i}$, $i \in [k]$,
and $h_{i+1}^\prime = h_i^\prime + h(a_i^\prime) + r_{i+1}$, $0 \leq i \leq k$, 
where $r_{k+1} = 0$ and, for $i \geq 1$,
\begin{equation*}
s_i = \begin{cases}
|a_i|, &  |a_i| \leq h_i^\prime, \\
h_i^\prime, & |a_i|-h_i^\prime >0 \text{ is even}, \\
h_i^\prime -1, & |a_i|-h_i^\prime > 0 \text{ is odd}, 
\end{cases}
\end{equation*}
and $2t_i = |a_i|-s_i$, 
we obtain inductively a sequence of paths $a_i^\prime$.
Let
$p^\prime = a_0^\prime \Us^{r_1} a_1^\prime \Us^{r_2} a_2^\prime \cdots \Us^{r_k} a_k^\prime$.
It is easy to check that $p^\prime \in \A$ and $p^\prime \sim p$.
Furthermore, since no two paths of $\A$ are equivalent,
we obtain the required result.
\end{proof}

\begin{Proposition}\label{puuu}
The number of $\Us\Us\Us$-equivalence classes of ballot paths of length $n \geq 1$ is equal to 
$f_{n}$.
\end{Proposition}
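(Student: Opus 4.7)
The plan is to compute the generating function $A(x)=\sum_{n\geq 0}|\A_n|x^n$ directly from the component decomposition $p=a_0c_1a_1\cdots c_ka_k$ of Proposition~\ref{p1} and then read off the coefficients; by Proposition~\ref{p1} this counts the $\Us\Us\Us$-equivalence classes.

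First I would split $\A$ into the part $\A^{(0)}$ with $k=0$ and the part $\A^{(+)}$ with $k\geq 1$. The paths in $\A^{(0)}$ are exactly $\varepsilon,\Us,(\Us\Ds)^t,\Us(\Us\Ds)^t$ for $t\geq 1$, contributing one path per length, so $A^{(0)}(x)=\frac{1}{1-x}$. For $\A^{(+)}$ I would multiply the generating functions of the individual factors: $a_0$ (now excluding the singleton $\Us$) contributes $1+\frac{x^2+x^3}{1-x^2}=\frac{1-x+x^2}{1-x}$, and each cluster $c_i=\Us^{r_i}$ with $r_i\geq 3$ contributes $\frac{x^3}{1-x}$.

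The central step, and really the only obstacle, is to verify that the generating function of a component $a_i$ (for $i\in[k]$) is independent of its height $h_i$. From the three allowed forms $\Ds^s,\Ds^{h_i}(\Us\Ds)^t,\Ds^{h_i-1}(\Us\Ds)^t$ with $s,t\geq 1$, a direct computation yields
\[
\sum_{s=1}^{h_i}x^s + \frac{x^{h_i+2}+x^{h_i+1}}{1-x^2} \;=\; \frac{x(1-x^{h_i})}{1-x}+\frac{x^{h_i+1}}{1-x} \;=\; \frac{x}{1-x},
\]
so the $x^{h_i}$-terms cancel (the identity uses only $h_i\geq 1$, which always holds since $h_i\geq 3$). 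Adding the empty-path option gives $\frac{1}{1-x}$ for the terminal component $a_k$ instead of $\frac{x}{1-x}$.

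Putting the factors together, the subset of $\A^{(+)}$ with exactly $k$ clusters has generating function
\[
\frac{1-x+x^2}{1-x}\left(\frac{x^3}{1-x}\right)^{k}\left(\frac{x}{1-x}\right)^{k-1}\frac{1}{1-x}=\frac{(1-x+x^2)\,x^{4k-1}}{(1-x)^{2k+1}};
\]
summing over $k\geq 1$ as a geometric series in $\frac{x^4}{(1-x)^2}$ and using the factorization $(1-x)^2-x^4=(1-x-x^2)(1-x+x^2)$ gives $A^{(+)}(x)=\frac{x^3}{(1-x)(1-x-x^2)}$. Then
\[
A(x)=A^{(0)}(x)+A^{(+)}(x)=\frac{1}{1-x}+\frac{x^3}{(1-x)(1-x-x^2)}=\frac{1-x^2}{1-x-x^2},
\]
whose coefficients $[x^n]A(x)$ equal $f_n$ for every $n\geq 1$, proving the proposition.
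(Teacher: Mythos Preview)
Your proof is correct. The key observation---that the generating function of an internal component $a_i$ is $\frac{x}{1-x}$ regardless of $h_i$---is exactly what makes the factorization of $A^{(+)}(x)$ legitimate, and you justify it cleanly (you also correctly note $h_i\ge 3$, so the third form $\Ds^{h_i-1}(\Us\Ds)^t$ always starts with $\Ds$ and there is no clash with the maximality of $c_i$). The algebra leading to $A(x)=\frac{1-x^2}{1-x-x^2}$ checks out, using $(1-x)^2-x^4=(1-x-x^2)(1-x+x^2)$ and $1-x-x^2+x^3=(1-x)(1-x^2)$.

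Your route is genuinely different from the paper's. The paper argues bijectively: it splits $\A_n$ into $\mathcal{S}_n=\{p:a(p)=\varepsilon\}$ and $\mathcal{T}_n=\{p:a(p)\neq\varepsilon\}$, shows $|\mathcal{T}_{n+1}|=|\A_n|$ by a last-step bijection on $a_k$, shows $|\mathcal{S}_{n+2}|=|\mathcal{S}_n|+|\mathcal{T}_{n-2}|+|\mathcal{T}_{n-1}|$ by peeling off the last cluster, and then deduces the Fibonacci recurrence $|\A_{n+2}|=|\A_{n+1}|+|\A_n|$ by induction. Your approach replaces these bijections with a single structural insight (height-independence of the component generating function), which lets the whole generating function factor as a geometric series. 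The paper's argument gives a direct combinatorial explanation of the recurrence, while yours is more systematic and would adapt readily to other strings whose components have height-independent generating functions.
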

\begin{proof}
Clearly, $|\A_1|= |\A_2| = 1$. 
Let $\mathcal{S}_n$ (resp. $\mathcal{T}_n$)
be the set of all $p \in \A_n$ with $a(p) = \varepsilon$
(resp. $a(p) \neq \varepsilon$).
We first show that 
\begin{equation}\label{uuuT}
|\mathcal{T}_{n+1}| = |\A_n|, \qquad n \geq 0.
\end{equation}
Indeed, for $p = a_0 \Us^{r_1}a_1  \cdots \Us^{r_k}a_k \in \mathcal{T}_{n+1}$,
so that $a_k \neq \varepsilon$, we define a path $q \in \A_{n+1}$ as follows:
\[
q = \begin{cases}
a_0 \Us^{r_1}a_1  \cdots \Us^{r_k}a_k^\prime, & k>0, \\
a_0^\prime, & k=0,
\end{cases}
\]
where
\[
a_k^\prime = 
\begin{cases}
\Ds^{s-1}, & a_k = \Ds^s, \\
\Ds^{h_k}(\Us\Ds)^{t-1}, & a_k = \Ds^{h_k-1}(\Us\Ds)^{t}, \\
\Ds^{h_k-1}(\Us\Ds)^{t}, & a_k = \Ds^{h_k}(\Us\Ds)^{t}, 
\end{cases}
\qquad
\text{and}
\qquad
a_0^\prime = 
\begin{cases}
\Us (\Us\Ds)^{t-1}, & a_0 = (\Us\Ds)^{t}, \\
(\Us\Ds)^{t}, & a_0 = \Us(\Us\Ds)^{t}. 
\end{cases}
\]
Since the mapping $p \mapsto q$ is clearly a bijection,
we obtain relation \eqref{uuuT}.

Next, we show that
\begin{equation}\label{uuuS1}
|\mathcal{S}_{n+2}| = |\mathcal{S}_{n}| + |\mathcal{T}_{n-2}| + |\mathcal{T}_{n-1}|, 
\qquad n \geq 1.
\end{equation}
Indeed, let $p = a_0 \Us^{r_1}a_1  \cdots \Us^{r_{k-1}}a_{k-1}\Us^{r_k} \in \mathcal{S}_{n+2}$.
If $r_k \geq 5$, then, by deleting the last 2 rises, 
we obtain a path of $\mathcal{S}_{n}$,
whereas, if $r_k=4$ (resp. $r_k=3$), then,
by deleting $\Us^{r_k}$ we obtain a path in $\mathcal{T}_{n-2}$
(resp. $\mathcal{T}_{n-1}$).
Since this procedure is reversible,
we obtain relation \eqref{uuuS1}.

Then, using induction and relations  \eqref{uuuT} and \eqref{uuuS1},
we show that $|\A_{n+2}| = |\A_{n+1}| +|\A_{n}|$.  
Indeed,
\begin{align*}
|\A_{n+2}| &= |\mathcal{T}_{n+2}| + |\mathcal{S}_{n+2}|
= |\mathcal{A}_{n+1}| + |\mathcal{S}_{n}| + |\mathcal{T}_{n-2}| + |\mathcal{T}_{n-1}|
\\&
= |\mathcal{A}_{n+1}| + |\mathcal{S}_{n}| + |\mathcal{A}_{n-3}| + |\mathcal{A}_{n-2}|
= |\mathcal{A}_{n+1}| + |\mathcal{S}_{n}| + |\mathcal{A}_{n-1}|
\\&= |\mathcal{A}_{n+1}| + |\mathcal{S}_{n}| + |\mathcal{T}_{n}|
= |\mathcal{A}_{n+1}| + |\mathcal{A}_{n}| .
\end{align*}
\end{proof}

\subsection{The string ddd}
Every $p \in \mathcal{P}$ is uniquely decomposed according to \eqref{clusters} as
$p = a_0 \Ds^{r_1} a_1 \Ds^{r_2} a_2 \cdots \Ds^{r_k} a_k$, $r_i \geq 3$, $i\in [k]$.

Let $\A$ be the set of ballot paths, the nonempty components of which have the form
$a_i = \Us^s$, $s \geq 1$, whereas $a_k$ can also be empty.

Equivalently, $\A$ is the set of ballot paths
avoiding udu and uddu, and ending with u or ddd.

\begin{Proposition}\label{pddd}
The number of $\Ds\Ds\Ds$-equivalence classes of ballot paths of length $n$ is equal to 
the $n$-th coefficient of the generating function $A = A(x)$, satisfying the relation
\[
x^2(1-2x+x^2-x^4) A^3 + 2x(1-x)^2 A^2 + (1-3x+x^2)A -1=0.
\]
\end{Proposition}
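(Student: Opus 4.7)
The plan is to mimic the strategy of Proposition~\ref{pdd} for the string dd: reduce $A$ to the generating function $\bar{A}$ of the Dyck subclass $\bar{\A}$, derive a cubic equation for $\bar{A}$ via a hierarchy of three Dyck classes, and finally eliminate $\bar{A}$. For the reduction, given $p \in \A \setminus \bar{\A}$, let $\alpha$ be the longest Dyck prefix of $p$. Since $p$ is ballot and non-Dyck, the step immediately after $\alpha$ must be $\Us$. A short boundary analysis using the avoidance of $\Us\Ds\Us$ and $\Us\Ds\Ds\Us$ forces the last $\Ds$-run of $\alpha$ to have length at least $3$ (otherwise the preceding $\Us$, the short $\Ds$-run, and the $\Us$ after $\alpha$ would create a forbidden pattern), so $\alpha \in \bar{\A}$. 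Stripping $\alpha\Us$ and shifting the remainder of $p$ down by one then yields a unique $q \in \A$; the bijection $p \mapsto (\alpha,q)$ gives $A = \bar{A} + x \bar{A} A$, equivalently $A = \bar{A}/(1 - x\bar{A})$.

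For the cubic equation for $\bar{A}$, note that $\bar{\A}$ consists exactly of Dyck paths all of whose $\Ds$-runs have length $\geq 3$. I would introduce two auxiliary classes of Dyck paths whose \emph{interior} $\Ds$-runs all have length $\geq 3$, but whose last $\Ds$-run has length only $\geq 2$ (call this class $\bar{\B}$, with GF $\bar{B}$) or only $\geq 1$ (equivalently, Dyck paths avoiding both $\Us\Ds\Us$ and $\Us\Ds\Ds\Us$; call this class $\K$, with GF $K$). A first-return-to-$0$ decomposition of each class into a sequence of arches, together with careful accounting of how the $\Ds$-run constraint on the last arch differs from that on the preceding arches, yields the system
\begin{align*}
K &= \frac{1}{1 - x^2 \bar{A}}, \\
\bar{B} &= 1 + x^2 (K - 1)\bar{A}, \\
\bar{A} &= \frac{1}{1 - x^2 (\bar{B} - 1)}.
\end{align*}
Eliminating $K$ and $\bar{B}$ produces the cubic $x^6 \bar{A}^3 + x^2 \bar{A}^2 - (1 + x^2)\bar{A} + 1 = 0$.

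Substituting $\bar{A} = A/(1 + xA)$ (from the reduction step) into this cubic and clearing denominators yields, after routine algebraic simplification, the stated cubic for $A$. The main obstacle is the middle step: identifying the three nested classes and correctly propagating the $\Ds$-run constraints through the arch decomposition. The crucial identity $K = 1/(1 - x^2 \bar{A})$ arises because every nonempty path in $\K$ factors uniquely as a prefix in $\bar{\A}$ followed by a final arch whose elevated interior again lies in $\K$; it is this recursion, together with the analogous ones for $\bar{B}$ and $\bar{A}$, that forces the final equation to be cubic rather than quadratic as in the dd case.
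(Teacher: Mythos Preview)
Your proposal is correct, and the reduction $A=\bar A/(1-x\bar A)$ via the longest Dyck prefix is exactly the paper's first step (stated there in one line as ``$p=\alpha\Us q$ with $\alpha\in\bar\A$, $q\in\A$''). For the cubic in $\bar A$, the paper takes a more direct route: it performs a single last-return decomposition of a nonempty $\alpha\in\bar\A$ and splits on whether the final $\Ds$-run of $\alpha$ has length $\ge 4$ (giving $\alpha=\beta\,\Us\,\gamma'\,\Ds$ with $\gamma'\in\bar\A\setminus\{\varepsilon\}$) or exactly $3$ (giving $\alpha=\beta\,\Us\,\gamma\,\Us\,\delta\,\Us\Ds\Ds\Ds$ with $\beta,\gamma,\delta\in\bar\A$), obtaining $\bar A-1=x^2\bar A(\bar A-1)+x^6\bar A^3$ in one stroke. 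Your chain $\bar\A\to\bar\B\to\K$, relaxing the last $\Ds$-run length from $\ge 3$ to $\ge 2$ to $\ge 1$, is precisely this case analysis unrolled into intermediate classes; eliminating $K$ and $\bar B$ from your system reproduces the paper's equation verbatim. The paper's version is more compact, while yours would extend mechanically to $\tau=\Ds^\mu$ for larger $\mu$ (a chain of $\mu$ classes in place of three).
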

\begin{proof}
A path $p \in \A \setminus \bar{\A}$ is decomposed as
$p = \alpha \Us q$, where $\alpha \in \bar{\A}$, $q \in \A$.
Hence,
$A = \bar{A} + x\bar{A}A$,
so that
\begin{equation}\label{dddA}
\bar{A} = \frac{A}{1+xA}.
\end{equation}
Moreover, a nonempty path $\alpha \in \bar{\A}$ is decomposed as
\[
\alpha = \beta \Us \gamma^\prime \Ds, \quad \text{or} \quad 
\alpha = \beta \Us \gamma \Us \delta \Us \Ds \Ds \Ds,
\qquad 
\beta, \gamma, \delta \in \bar{\A}, \gamma^\prime \in \bar{\A} \setminus \{ \varepsilon\}.
\]
Hence, 
\begin{equation}\label{dddA2}
\bar{A} -1 = x^2\bar{A}(\bar{A}-1) + x^6 \bar{A}^3.
\end{equation}

From relations \eqref{dddA} and \eqref{dddA2}, we obtain the required result.
\end{proof}

\subsection{The strings uud and duu}\label{uud}

Every $p \in  \mathcal{P}$ is uniquely decomposed according to \eqref{clusters} as
$p = a_0 \Us\Us\Ds a_1 \Us\Us\Ds a_2 \cdots \Us\Us\Ds a_k$.

Similarly, every $p$ is uniquely decomposed as
$p = a_0 \Ds\Us\Us a_1 \Ds\Us\Us a_2 \cdots \Ds\Us\Us a_k$.

Let $\A^{(\Us\Us\Ds)}$ be the set of ballot paths 
with their components $a_i$ having either one of the forms:
\[
a_i = \Ds^s, s \leq h_i-1, \qquad
a_i = \Ds^{h_i} (\Us \Ds)^t, \qquad
a_i = \Ds^{h_i} (\Us \Ds)^t \Us, 
\]
where $s,t \geq 0$, 
and let $\A^{(\Ds\Us\Us)}$ be the set of ballot paths 
with their components $a_i$ having either one of the forms:
\[
a_0 = (\Us \Ds)^t \Us, \qquad a_0 = (\Us \Ds)^t \Us^2, k >0, \qquad a_0 = (\Us \Ds)^t, k=0,
\] 
\[
a_i = \Ds^s, s \leq h_i-2, \qquad
a_i = \Ds^{h_i-1} (\Us \Ds)^t, \qquad
a_i = \Ds^{h_i-1} (\Us \Ds)^t \Us, 
\]
where $0<i<k$ and $s,t \geq 0$,
\[
a_k = \Ds^s, s \leq h_k-1, \qquad
a_k = \Ds^{h_k} (\Us \Ds)^t, \qquad
a_k = \Ds^{h_k} (\Us \Ds)^t \Us, 
\]
where $k>0$ and $s,t \geq 0$.

We will give the proof of Proposition \ref{p1} for the string duu only,
since the proof for uud it is similar and slightly easier.

\begin{proof}[Proof of Proposition \ref{p1} for $\tau = \Ds \Us \Us$.]
Given a ballot path $p = a_0 \Ds\Us\Us a_1 \Ds\Us\Us a_2 \cdots \Ds\Us\Us a_k$, 
setting $h^\prime_0 = 0$, 
$a_0^\prime = (\Us\Ds)^{(|a_0|-1)/2}\Us$, if $|a_0|$ is odd, 
$a_0^\prime = (\Us\Ds)^{|a_0|/2}\Us$, if $|a_0|$ is even and $k=0$,
and
$a_0^\prime = (\Us\Ds)^{(|a_0|-2)/2}\Us^2$, if $|a_0|$ is even and $k>0$,
and $h_{i+1}^\prime = h_i^\prime + h(a_i^\prime)+[i<k]$, $i \in [k]$, 
and also setting
\begin{equation*}
a_i^\prime =
\begin{cases}
\Ds^{|a_i|}, & |a_i| \leq h_i^\prime -1, \\
\Ds^{h_i^\prime-1} (\Us\Ds)^{(|a_i|-h_i^\prime)/2}\Us, & |a_i|-h_i^\prime > 0 \text{ is even}, \\
\Ds^{h_i^\prime-1} (\Us\Ds)^{(|a_i|-h_i^\prime+1)/2}, & |a_i|-h_i^\prime > 0 \text{ is odd}, 
\end{cases}
\qquad 1 \leq i < k,
\end{equation*}
and
\[
a_k^\prime =
\begin{cases}
\Ds^{|a_k|}, &  |a_k| \leq h_k^\prime, \\
\Ds^{h_k^\prime} (\Us\Ds)^{(|a_k|-h_k^\prime)/2}, &  |a_k|-h_k^\prime > 0 \text{ is even}, \\
\Ds^{h_k^\prime} (\Us\Ds)^{(|a_k|-h_k^\prime-1)/2}\Us, &  |a_k|-h_k^\prime > 0 \text{ is odd}, 
\end{cases}
\qquad k > 0,
\]
we obtain inductively a sequence of paths $a_i^\prime$.
Let $p^\prime = a_0^\prime \Ds\Us\Us a_1^\prime \Ds\Us\Us a_2^\prime \cdots \Ds\Us\Us a_k^\prime$.
It is easy to check that $p^\prime \in \A^{(\Ds\Us\Us)}$ and 
$p^\prime \underset{\Ds\Us\Us}{\sim} p$.
Furthermore, since no two paths of $\A^{(\Ds\Us\Us)}$ are equivalent,
we obtain the required result.
\end{proof}

We note that 
\[
|\A_1^{(\Us\Us\Ds)}| = |\A_2^{(\Us\Us\Ds)}| =1
\qquad \text{and} \qquad |\A_3^{(\Us\Us\Ds)}| = 2,
\]
\[
|\A_1^{(\Ds\Us\Us)}| = |\A_2^{(\Ds\Us\Us)}| = |\A_3^{(\Ds\Us\Us)}| = 1,
\]
and
\begin{equation}\label{eq2}
|\A_{n+1}^{(\Ds\Us\Us)}| = |\A_n^{(\Us\Us\Ds)}|, \qquad n \geq 3.
\end{equation}

For the proof of the last equality, consider the bijection
which maps each 
\[
p = a_0 \Ds\Us\Us a_1 \Ds\Us\Us a_2 \cdots \Ds\Us\Us a_{k-1} \Ds\Us\Us a_k \in \A^{(\Ds\Us\Us)}
\]
to the path
\[
q = a_0^\prime \Us\Ds\Ds a_1 \Us\Ds\Ds a_2 \cdots \Us\Ds\Ds a_{k-1} \Us\Ds\Ds a_k^\prime 
\in \A^{(\Us\Us\Ds)},
\]
where $a_0^\prime$ is obtained by deleting the last step of $a_0$ and, for $k>0$,
\[
a_k^\prime =
\begin{cases}
a_k, &  a_k = \Ds^s, 0\leq s \leq  h_k-1, \\
\Ds^{h_k-1} (\Us\Ds)^{t+1}, &  a_k= \Ds^{h_k} (\Us\Ds)^{t}\Us, t \geq 0, \\
\Ds^{h_k-1} (\Us\Ds)^{t}\Us, &  a_k = \Ds^h_k (\Us\Ds)^t, t \geq 0. 
\end{cases}
\]

\begin{Proposition}\label{pduu}
The number $|\A_n^{(\tau)}|$ of $\tau$-equivalence classes of ballot paths of length $n \geq 1$ 
satisfies the recurrence relation 
\[
|\A_{n+3}^{(\tau)}| = |\A_{n+2}^{(\tau)}| + |\A_{n}^{(\tau)}|,
\]
where $\tau \in \{\Us\Us\Ds, \Ds\Us\Us\}$.
\end{Proposition}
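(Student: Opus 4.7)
The plan is to establish the recurrence for $\tau = \Us\Us\Ds$ directly; the case $\tau = \Ds\Us\Us$ then follows from the shift identity \eqref{eq2}. Following the pattern of Section \ref{uuu}, I would split $\A_n = \mathcal{S}_n \sqcup \mathcal{T}_n$ according to whether $a_k = \varepsilon$ or $a_k \neq \varepsilon$, and prove the two bijective equalities
\[
|\mathcal{S}_{n+3}| = |\A_n|, \qquad |\mathcal{T}_{n+3}| = |\A_{n+2}|,
\]
from which $|\A_{n+3}| = |\mathcal{S}_{n+3}| + |\mathcal{T}_{n+3}| = |\A_n| + |\A_{n+2}|$ follows immediately.

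For the first identity I would show that the map $p \mapsto p\Us\Us\Ds$ is a bijection $\A_n \to \mathcal{S}_{n+3}$. If $a_k \neq \varepsilon$ in $p$, the appended $\Us\Us\Ds$ becomes a new trailing cluster separated from $c_k$ by $a_k$; if $a_k = \varepsilon$, it either forms the first cluster (when $p = \varepsilon$) or merges with the existing $c_k$ into $c_k \Us\Us\Ds$. In every case the new last component is empty and the terminal height is at least $1$, so it qualifies as a valid $\Ds^0$-component. The inverse removes the last copy of $\Us\Us\Ds$ from the final cluster, deleting the cluster entirely if it has length $3$.

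For the second identity I would use the map $\psi : \mathcal{T}_{n+3} \to \A_{n+2}$ that deletes the last step of $p$. A short case analysis on the shape of $a_k$ shows that the truncated component still has one of the three prescribed forms; for example, $\Ds^{h_k}(\Us\Ds)^t$ with $t \geq 1$ becomes $\Ds^{h_k}(\Us\Ds)^{t-1}\Us$, and $\Ds^{h_k}(\Us\Ds)^t\Us$ becomes $\Ds^{h_k}(\Us\Ds)^t$. Bijectivity is equivalent to the claim that every $q \in \A_{n+2}$ admits exactly one extension by a single step producing a path in $\mathcal{T}_{n+3}$. This is where the bulk of the argument lies: depending on the form of the last component of $q$, either appending $\Us$ or appending $\Ds$ (but never both) yields a component of one of the three allowed forms, while the other choice either violates the ballot-path condition, creates a forbidden shape such as $\Ds^s\Us$ with $s < h_k^\prime$ or a trailing $\Us\Us$, or fails to match any of the three patterns.

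The main obstacle is this uniqueness-of-extension analysis, which requires inspecting each of the five subcases for $a_k^\prime$ (namely $\varepsilon$, $\Ds^s$ with $s < h_k^\prime$, $\Ds^{h_k^\prime}$, $\Ds^{h_k^\prime}(\Us\Ds)^t$ with $t \geq 1$, and $\Ds^{h_k^\prime}(\Us\Ds)^t\Us$) and checking in each case which of $\Us$ or $\Ds$ is legal; the empty case further splits according to whether $h_k^\prime$ is $0$ or positive. Once this routine but careful verification is done, the two bijections combine to give the recurrence for $\Us\Us\Ds$, and \eqref{eq2} then transfers it to $\Ds\Us\Us$, as required.
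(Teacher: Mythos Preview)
Your approach is essentially identical to the paper's: split according to whether $a(p)=\varepsilon$, and in each case delete either the trailing $\Us\Us\Ds$ or the last step to obtain bijections with $\A_n$ and $\A_{n+2}$; then invoke \eqref{eq2} for $\Ds\Us\Us$. The paper states this in two sentences and declares the procedure ``clearly reversible'', whereas you spell out the case analysis for the inverse --- which is fine and correct.

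One small imprecision: when $a_k=\varepsilon$ and you append $\Us\Us\Ds$, the new copy does \emph{not} merge with $c_k$. The string $\Us\Us\Ds$ has no nontrivial self-overlap, so every maximal cluster is a single copy of length~$3$; the appended block becomes a new cluster $c_{k+1}$ with $a_k=\varepsilon$ between them. This does not affect your argument (your stated inverse still reduces to ``delete the final $\Us\Us\Ds$''), but the description should be corrected.
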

\begin{proof}
From each path $p \in \A_{n+3}^{(\Us\Us\Ds)}$, by deleting its last uud, if $a(p) = \varepsilon$, 
or its last step, if $a(p) \neq \varepsilon$, we obtain respectively
a path $q \in \A_{n}^{(\Us\Us\Ds)}$ or a path $q \in \A_{n+2}^{(\Us\Us\Ds)}$.
Since this procedure is clearly reversible, we obtain the required relation 
for $\tau = \Us\Us\Ds$.

The result for the string $\Ds\Us\Us$ then follows from relation \eqref{eq2}.
\end{proof}

\subsection{The strings udd and ddu}
Every $p \in \mathcal{P}$ is uniquely decomposed according to \eqref{clusters} as
$p = a_0 \Us\Ds\Ds a_1 \Us\Ds\Ds a_2 \cdots \Us\Ds\Ds a_k$.

Similarly, every $p$ is uniquely decomposed as
$p = a_0 \Ds\Ds\Us a_1 \Ds\Ds\Us a_2 \cdots \Ds\Ds\Us a_k$.

Let $\A^{(\Us\Ds\Ds)}$ (resp. $\A^{(\Ds\Ds\Us)}$) be the set of ballot paths, 
the $\Us\Ds\Ds$-components (resp. $\Ds\Ds\Us$-components) of which have the form
$a_i = \Us^s$, $s \geq 0$.

Equivalently, $\A^{(\Us\Ds\Ds)}$ (resp. $\A^{(\Ds\Ds\Us)}$) 
is the set of ballot paths avoiding udu and ddd, and ending with udd or u
(resp. with u).

Hence, by deleting the last rise of every path in $\A_{n+1}^{(\Ds\Ds\Us)}$,
we obtain bijectively every path in $\A_n^{(\Us\Ds\Ds)}$, so that
\begin{equation}\label{eq3}
|\A_{n+1}^{(\Ds\Ds\Us)}| = |\A_n^{(\Us\Ds\Ds)}|, \qquad n \geq 0.
\end{equation}

\begin{Proposition}\label{pudd}
The number 
of $\Us\Ds\Ds$ (resp. $\Ds\Ds\Us$)-equivalence classes 
of ballot paths of length $n \geq 1$ is equal to
the $n$-th (resp. $(n-1)$-th) coefficient of the generating function
\[
\frac{C(x^4)}{1-xC(x^4)},
\]
where
\[
[x^n]\frac{C(x^4)}{1-xC(x^4)} 
= \sum_{i=0}^{\lfloor n/4 \rfloor} \frac{n-4i+1}{n-3i+1} \binom{n-2i}{i}.
\]
\end{Proposition}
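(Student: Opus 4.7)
The plan is to interpret $\A^{(\Us\Ds\Ds)}$ via a length-preserving bijection with ballot paths on a simpler alphabet, and then to compute the resulting generating function by a first-return decomposition. Since every $p\in\A^{(\Us\Ds\Ds)}$ is uniquely of the form
\[
p=\Us^{s_0}\,(\Us\Ds\Ds)\,\Us^{s_1}\cdots(\Us\Ds\Ds)\,\Us^{s_k},\qquad s_i\geq 0,
\]
I would introduce the two-letter alphabet $\{\Us,X\}$, where $X$ is a ``long'' down-step weighted $x^3$, and define $\varphi:\A^{(\Us\Ds\Ds)}\to\mathcal{Q}$ by replacing each cluster $\Us\Ds\Ds$ with $X$. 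The heights traversed inside $\Us\Ds\Ds$ starting at height $h$ are $h,h{+}1,h,h{-}1$, and hence stay non-negative iff $h\geq 1$, which is exactly the condition that an $X$-step leaves $\varphi(p)$ non-negative; thus $\varphi$ is a length-preserving bijection onto the set $\mathcal{Q}$ of ballot paths on $\{\Us,X\}$ (with $\Us$ weighted $x$ and $X$ weighted $x^3$).

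Let $D^{\ast}=D^{\ast}(x)$ and $A=A(x)$ denote the generating functions for Dyck paths and for ballot paths in $\mathcal{Q}$. A first-return decomposition of a non-empty Dyck path as $\Us\cdot(\text{Dyck})\cdot X\cdot(\text{Dyck})$ gives $D^{\ast}=1+x^4(D^{\ast})^2$, whence $D^{\ast}=C(x^4)$. For ballot paths, splitting a non-empty $q\in\mathcal{Q}$ according to whether the suffix after its initial $\Us$-step stays strictly positive or first returns to $0$ (which must occur through an $X$-step) yields
\[
A=1+xA+x^4 D^{\ast} A.
\]
Using the identity $1/C(x^4)=1-x^4 C(x^4)$, obtained by dividing $C(y)=1+yC(y)^2$ by $C(y)$ at $y=x^4$, this simplifies to $A(x)=\frac{C(x^4)}{1-xC(x^4)}$.

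To extract coefficients, I would expand $A(x)=\sum_{j\geq 0}x^j\,C(x^4)^{j+1}$ and apply the classical formula $[y^i]\,C(y)^{j+1}=\frac{j+1}{i+j+1}\binom{2i+j}{i}$; matching $j+4i=n$ produces the stated sum. The case of $\Ds\Ds\Us$ then follows immediately from relation~\eqref{eq3}: $|\A_n^{(\Ds\Ds\Us)}|=|\A_{n-1}^{(\Us\Ds\Ds)}|$, which is the $(n-1)$-st coefficient of the same generating function. The main subtle point is the ``stays strictly positive'' alternative in the ballot decomposition, which contributes $xA$ rather than a Dyck factor; once one observes that such suffixes are in bijection with arbitrary elements of $\mathcal{Q}$ (by a vertical shift), the remaining manipulations are routine generating-function algebra.
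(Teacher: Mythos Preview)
Your proof is correct and follows essentially the same strategy as the paper: both recognize that $\A^{(\Us\Ds\Ds)}$ consists of ballot paths built from unit up-steps and $\Us\Ds\Ds$-blocks (your bijection to $\{\Us,X\}$ just makes this explicit), obtain $\bar A=D^{\ast}=C(x^4)$ from the quadratic Catalan recursion, relate $A$ to $\bar A$ by a standard ballot-path decomposition, and finish with the same geometric-series coefficient extraction and the shift~\eqref{eq3}. The only cosmetic difference is that the paper uses the last-return (Dyck-prefix) decomposition $p=\alpha\,\Us\,q$ giving $A=\bar A/(1-x\bar A)$, whereas you use a first-return decomposition giving $A=1+xA+x^4\bar A\,A$; these are equivalent via the identity $1-x^4C(x^4)=1/C(x^4)$ you invoke.
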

\begin{proof}
First assume that $\A = \A^{(\Us\Ds\Ds)}$.
A path $p \in \A \setminus \bar{\A}$ is decomposed as
$p = \alpha \Us q$, where $\alpha \in \bar{\A}$, $q \in \A$.
Hence,
$A = \bar{A} + x\bar{A}A$,
so that
\[ 
A = \frac{\bar{A}}{1-x\bar{A}}. 
\] 

Moreover, a nonempty path $\alpha \in \bar{\A}$ is decomposed as
$\alpha = \beta \Us \gamma \Us \Ds \Ds$, where $\beta, \gamma \in \bar{\A}$.
Hence, 
$\bar{A} -1 = x^4\bar{A}^2$,
which implies that
$\bar{A} = C(x^4)$.

It follows that 
\[
A(x) = \frac{C(x^4)}{1-xC(x^4)}.
\]

Then by expanding $A(x)$ to a geometric series and using the well known formula
\begin{equation}\label{CatalanPowers}
[x^n]C^s(x) = \frac{s}{2n+s} \binom{2n+s}{n}, \qquad n \geq 0, s >0,
\end{equation}
we can easily obtain the formula for $|\A_n^{(\Us\Ds\Ds)}|$.
The formula for $|\A_n^{(\Ds\Ds\Us)}|$ then follows from relation \eqref{eq3}.
\end{proof}

\subsection{The string udu}\label{udu}
Every $p \in \mathcal{P}$ is uniquely decomposed according to \eqref{clusters} as
$p = a_0 (\Us\Ds)^{r_1}\Us a_1 (\Us\Ds)^{r_2}\Us a_2 \cdots (\Us\Ds)^{r_k}\Us a_k$, $r_i \geq 1$, $i \in [k]$.

Let $\A^{(\Us\Ds\Us)}$ be the set of ballot paths, the components of which have the form
$a_i = \Us^s$, $s \geq 0$.

Equivalently, $\A^{(\Us\Ds\Us)}$ is the set of ballot paths avoiding dd and ending with u.

It follows that $\A^{(\Us\Ds\Us)} = \A^{(\Ds\Us)}$, so that by Proposition \ref{puddu}
we have the following result:
\begin{Proposition}\label{pudu}
The number of $\Us\Ds\Us$-equivalence classes of ballot paths of length $n \geq 1$ 
is equal to $f_n$. 
\end{Proposition}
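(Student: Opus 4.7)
The plan is to reduce this statement to Proposition \ref{puddu} by establishing that the canonical sets of representatives $\A^{(\Us\Ds\Us)}$ and $\A^{(\Ds\Us)}$ coincide as subsets of $\mathcal{P}$. Once that set equality is in hand, the result is immediate: Proposition \ref{p1} gives a bijection between each set of representatives and the set of $\tau$-equivalence classes of the corresponding length, and Proposition \ref{puddu} has already established $|\A^{(\Ds\Us)}_n| = f_n$ for $n \geq 1$.

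The key step is therefore to check that the two equivalent descriptions given in the text agree. Both are characterized as \emph{ballot paths avoiding $\Ds\Ds$ and ending with $\Us$}, so I would formally verify this for the $\Us\Ds\Us$ side. For the forward inclusion, take $p = a_0 (\Us\Ds)^{r_1}\Us a_1 \cdots (\Us\Ds)^{r_k}\Us a_k \in \A^{(\Us\Ds\Us)}$, so that every $a_i = \Us^{s_i}$. Each cluster $(\Us\Ds)^{r_i}\Us$ contains no $\Ds\Ds$, each component contains no $\Ds$ at all, and the junctions cluster--component and component--cluster are of the form $\Us\Us^{s_{i+1}}$ and $\Us^{s_i}\Us$ respectively, so no $\Ds\Ds$ can arise. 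The last step of $p$ is either the terminal $\Us$ of the final cluster (when $s_k = 0$) or a rise of $a_k$, so $p$ ends in $\Us$. For the reverse inclusion, given a ballot path avoiding $\Ds\Ds$ and ending with $\Us$, consider its $\Us\Ds\Us$-decomposition and note that any $\Ds$ appearing inside a component $a_i$ would either create a $\Ds\Ds$ internally, or, together with an adjacent falling step of a cluster, create a $\Ds\Ds$ across a boundary, or (for $i = k$) force $p$ to end with a $\Ds$. Hence every $a_i$ is a pure string of rises.

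The only mild obstacle is the boundary bookkeeping: confirming that the condition ``ends with $\Us$'' is handled correctly when $a_k = \varepsilon$ and that small values of $n$ behave as expected. Both are unproblematic, since when $a_k = \varepsilon$ the path ends with the terminal $\Us$ of the last $\Us\Ds\Us$-cluster, and $|\A^{(\Ds\Us)}_1| = 1 = f_1$ confirms the base case. Combining the set equality $\A^{(\Us\Ds\Us)}_n = \A^{(\Ds\Us)}_n$ with Propositions \ref{p1} and \ref{puddu} then yields $|\A^{(\Us\Ds\Us)}_n| = f_n$, completing the proof.
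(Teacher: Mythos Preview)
Your proposal is correct and follows exactly the paper's approach: the paper observes (in the text immediately preceding the proposition) that the equivalent description of $\A^{(\Us\Ds\Us)}$ as ``ballot paths avoiding $\Ds\Ds$ and ending with $\Us$'' coincides with that of $\A^{(\Ds\Us)}$, and then invokes Proposition~\ref{puddu}. You simply supply the details of that set equality, which the paper leaves implicit.
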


\subsection{The string dud}
Every $p \in \mathcal{P}$ is uniquely decomposed according to \eqref{clusters} as
$p = a_0 \Ds(\Us\Ds)^{r_1} a_1 \Ds(\Us\Ds)^{r_2} a_2 \cdots \Ds(\Us\Ds)^{r_k}\Us a_k$, $r_i \geq 1$, $i \in [k]$.

Let $\A$ be the set of ballot paths, the nonempty components of which have either 
one of the following forms:
\[
a_i = \Us^s , \quad s \geq 1 + [0 < i < k], \qquad \qquad
a_i = \Ds, \quad 0 < i < k. 
\]

Equivalently, $\A$ is the set of ballot paths $p$
avoiding $\Ds^4$, $\Us^2\Ds^2$, $\Ds^2\Us^2$, $\Us^2\Ds\Us^2$, 
starting with uu or udud and ending with uu, dud or dudu,
unless $p \in \{ \varepsilon, \Us\}$.

\begin{Proposition}\label{pdud}
The number of $\Ds\Us\Ds$-equivalence classes of ballot paths of length $n$ 
is equal to the $n$-th coefficient of the generating function $A = A(x)$, 
satisfying the relation
\[
x^2(1-x-x^2)A^3 + 2x(1-\frac{3}{2}x-x^2+x^3+x^4-x^5)A^2 +
(1-3x-x^2+3x^3+x^4-3x^5)A - 1+x^2-x^4=0.
\] 
\end{Proposition}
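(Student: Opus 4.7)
The plan is to follow the template established in the proofs of Propositions \ref{puu} and \ref{pddd}: relate $A$ to $\bar{A}$ via a decomposition of the non-Dyck portion $\A \setminus \bar{\A}$, derive a system of functional equations for $\bar{\A}$ together with a few auxiliary Dyck subclasses, and eliminate all auxiliaries to produce a single polynomial equation for $A$. The starting point is the characterization given in the statement: the paths in $\A$ are exactly the ballot paths avoiding the four local patterns $\Ds^4$, $\Us^2\Ds^2$, $\Ds^2\Us^2$, $\Us^2\Ds\Us^2$, subject to the prescribed start/end conditions. Since all forbidden subwords have length at most 4, a bounded amount of boundary information suffices at every splitting point.

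First I would introduce auxiliary generating functions for subclasses of $\bar{\A}$ refined by their rightmost (and possibly leftmost) few steps, for instance the Dyck paths in $\bar{\A}$ ending with $\Us\Ds$, those ending with $\Us\Ds\Ds$, those ending with $\Ds$ or those beginning with $\Us\Ds$; the exact choice is dictated by which boundary patterns need to be controlled in order to avoid $\Us^2\Ds^2$, $\Ds^2\Us^2$ and $\Us^2\Ds\Us^2$ when two factors are glued. Using the first-return decomposition $\alpha = \Us\beta\Ds\gamma$ of a nonempty path $\alpha \in \bar{\A}$, with $\beta, \gamma$ constrained to the appropriate auxiliary classes so that no forbidden pattern straddles the return $\Ds$, I expect to obtain a coupled system of quadratic equations among $\bar{A}$ and these auxiliaries.

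Second, I would decompose a non-Dyck path $p \in \A \setminus \bar{\A}$ by the last point on the $x$-axis, writing $p = \alpha \Us q$ with $\alpha \in \bar{\A}$ and $q$ a suffix that stays strictly above $0$ after the $\Us$. The start condition (``starts with $\Us\Us$ or $\Us\Ds\Us\Ds$'') and the forbidden subwords around the splitting $\Us$ force a case analysis on the ending of $\alpha$ and the leading steps of $q$; the small exceptional paths $\varepsilon$ and $\Us$ are handled separately. This yields a relation expressing $A$ linearly in terms of $\bar{A}$ and the auxiliary functions from the previous step.

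The main obstacle will be the bookkeeping forced by the four forbidden subwords and the prescribed start/end conditions: each junction between consecutive factors must be classified according to the last one to three steps of the preceding factor and the first one to three steps of the following one, producing a moderately large system of coupled equations. Once the system is written down, elimination of the auxiliary generating functions is a routine calculation, and the appearance of a cubic in $A$ is consistent with the two nested first-return decompositions (one inside $\bar{\A}$, one in the relation $A \leftrightarrow \bar{A}$) together with the quadratic character of the equations for the auxiliary Dyck classes; this elimination should produce precisely the cubic equation stated in the proposition.
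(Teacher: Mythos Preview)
Your plan is sound and shares the paper's two-stage template: relate $A$ to $\bar{A}$ through the last point on the $x$-axis, then obtain an algebraic equation for $\bar{A}$ by decomposing the Dyck paths in the class. The paper, however, executes both stages more simply than you anticipate. For the first stage no boundary case analysis is needed: because every nonempty path in $\bar{\A}$ necessarily ends with $\Ds\Us\Ds$ and every path in $\A$ of length at least two begins with $\Us\Us$ or $\Us\Ds\Us\Ds$, the concatenation $\alpha\,\Us\,q$ with $\alpha\in\bar{\A}$ and $q\in\A$ can never create any of $\Ds^4$, $\Us^2\Ds^2$, $\Ds^2\Us^2$, $\Us^2\Ds\Us^2$ nor violate the start/end constraints, so one gets directly $A=\bar{A}+x\bar{A}A$, i.e.\ $\bar{A}=A/(1+xA)$. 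For the second stage the paper avoids first-return with several boundary-tracking auxiliaries; instead it introduces a \emph{single} auxiliary class $\bar{\B}$ by stripping the obligatory trailing $\Us\Ds$ from each nonempty $\alpha\in\bar{\A}$ (so $\bar{A}-1=x^2\bar{B}$) and then splits $\beta\in\bar{\B}$ according to the length of its terminal run of falls, which is $1$, $2$, or $3$ since $\Ds^4$ is forbidden. The three cases give $\bar{B}=x^2+x^2\bar{B}+x^4\bar{A}\,\bar{B}+x^6\bar{A}^{2}\bar{B}$, and together with $\bar{A}-1=x^2\bar{B}$ this is already a cubic in $\bar{A}$; substituting $\bar{A}=A/(1+xA)$ yields the stated cubic in $A$. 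Your first-return scheme would reach the same endpoint after eliminating a larger system of boundary-refined unknowns; the paper's ``peel off the last $\Us\Ds$, then classify by the depth of the last descent'' viewpoint is the shortcut that keeps the whole argument to two unknowns and three cases.
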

\begin{proof}
A path $p \in \A \setminus \bar{\A}$ is decomposed as
$p = \alpha \Us q$, where $\alpha \in \bar{\A}$, $q \in \A$.
Hence,
$A = \bar{A} + x\bar{A}A$,
so that
\begin{equation}\label{dudA}
\bar{A} = \frac{A}{1+xA}.
\end{equation}

We set $\bar{\B}$ to be the set of Dyck paths $\beta$
avoiding $\Ds^4$, $\Us^2\Ds^2$, $\Ds^2\Us^2$, $\Us^2\Ds\Us^2$
and starting with uu or udud, unless $\beta = \Us\Ds$.
Then, since every nonempty path $\alpha \in \bar{\A}$ 
is uniquely decomposed as
$\alpha = \beta \Us\Ds$, $\beta \in \bar{\B}$, we have that
\begin{equation}\label{dudAB}
\bar{A} - 1 = x^2 \bar{B}.
\end{equation}

A path $\beta \in \bar{\B}$ is uniquely decomposed as
\[
\beta = \gamma \Us\Ds, \quad \text{or} \quad
\beta =  \alpha \Us \beta^\prime \Us\Ds \Ds, \quad \text{or} \quad
\beta = \alpha \Us \alpha^\prime \Us \beta^\prime \Us\Ds \Ds \Ds, 
\qquad
\alpha, \alpha^\prime \in \bar{\A} ,
\gamma \in \bar{\B} \cup \{\varepsilon\},
\beta^\prime \in \bar{\B}.
\]
It follows that
\[
\bar{B} = x^2 + x^2\bar{B} + x^4\bar{A}\bar{B} + x^6\bar{A}^2\bar{B},
\]
which, combined with \eqref{dudAB}, gives
\[
\bar{A}-1 = x^4 + x^2(\bar{A}-1) +x^4\bar{A}(\bar{A}-1)+x^6\bar{A}^2(\bar{A}-1),
\]
so that
\begin{equation}\label{dudA2}
x^6\bar{A}^3  + x^4(1-x^2)\bar{A}^2 -(1-x^2+x^4)\bar{A} + 1-x^2+x^4 =0.
\end{equation}

From relations \eqref{dudA} and \eqref{dudA2}, we obtain the required result.
\end{proof}


\section{Enumeration of Dyck classes}

In this section, we evaluate the number of $\tau$-equivalence classes of Dyck paths of semilength $n$, for any string $\tau$ of length 3. 
By symmetry, it is enough to deal only with the strings uuu, uud, duu and udu.
For the first 3 cases, we use the set $\A^{(\tau)}$ of representatives 
defined for the corresponding $\tau$'s in the previous section, 
whereas for the case of udu, we define a new one.

In the sequel, all generating functions used for the enumeration of the Dyck classes
are defined with respect to the semilength of the Dyck paths.

\begin{Proposition}\label{pADyck}
The set $\bar{\A}^{(\tau)} = \A^{(\tau)} \cap \D$ is equal to the set
of representatives of the Dyck classes, for $\tau \in \{ \Us\Us\Us, \Us\Us\Ds, \Ds\Us\Us\}$.
\end{Proposition}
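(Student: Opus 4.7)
The plan is to prove, for each $\tau \in \{\Us\Us\Us, \Us\Us\Ds, \Ds\Us\Us\}$, that if $p \in \D$ then its unique representative $p' \in \A^{(\tau)}$ (supplied by Proposition \ref{p1}) lies in $\bar{\A}^{(\tau)}$. Combined with the immediate observation that every element of $\bar{\A}^{(\tau)}$ represents a (necessarily Dyck) class, and with the uniqueness of representatives, this yields the statement.

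The central lemma I would prove is the monotonicity $h_i' \leq h_i$ for all $0 \leq i \leq k+1$, where $(h_i)$ and $(h_i')$ are the height sequences attached to the decompositions of $p$ and $p'$ via \eqref{clusters}. Once this is established, applying it at $i = k+1$ gives $h_{k+1}' \leq h_{k+1} = 0$, and the ballot condition $h_{k+1}' \geq 0$ then forces $h_{k+1}' = 0$, so $p' \in \D$.

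I would prove the lemma by induction on $i$, the base case $h_0 = h_0' = 0$ being trivial. For the inductive step, the identity
\[
h_{i+1}' - h_{i+1} = (h_i' - h_i) + (h(a_i') - h(a_i))
\]
reduces the task to showing $h(a_i') \leq h(a_i) + (h_i - h_i')$. Here I would invoke the explicit construction of $a_i'$ from the proof of Proposition \ref{p1}: each $a_i'$ takes one of a small number of standard forms, essentially $\Ds^{s_i}(\Us\Ds)^{t_i}$ possibly augmented by a trailing $\Us$ (or, at $i = 0$, a different canonical form starting with $\Us$), so $h(a_i')$ is read off directly from $s_i$ and the trailing rise. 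The case analysis splits on whether $|a_i|$ is small enough to fit entirely in falls, or $|a_i| - h_i'$ is positive and even, or positive and odd. In two of these sub-cases the required inequality is immediate from the ballot lower bound on $h_i + h(a_i)$ ($\geq 0$ when the following cluster starts with $\Us$, $\geq 1$ when it starts with $\Ds$). The remaining sub-case is handled by a parity argument: combining $h_i \equiv h_i' \pmod{2}$ from \eqref{parity} with $|a_i| \equiv h(a_i) \pmod{2}$ forces $h_i + h(a_i)$ to have a definite parity, which together with the ballot bound gives the needed strict slack.

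The main obstacle is the bookkeeping across sub-cases, especially for $\tau = \Ds\Us\Us$, where the form of $a_i'$ differs between $i = 0$, $0 < i < k$, and $i = k$; the boundary positions require using the specific canonical forms of $a_0'$ and $a_k'$ together with the parity constraints forced by $h_{k+1} = 0$ (so that at $i = k$ the problematic odd sub-case is automatically excluded for Dyck input). Once one string is worked out in detail, the remaining two follow by the same pattern with minor adaptations.
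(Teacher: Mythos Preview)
Your proposal is correct and follows essentially the same approach as the paper: both reduce to the monotonicity $h_i' \leq h_i$ proved by induction, using the explicit forms of the $a_i'$ from Proposition~\ref{p1} together with the ballot lower bound and the parity relation \eqref{parity} to dispatch the three sub-cases. Your discussion of the $\Ds\Us\Us$ endpoint case is in fact more detailed than the paper's, which treats only $\Us\Us\Us$ explicitly and declares the others similar.
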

\begin{proof}
It is enough to show that if $p \in \mathcal{P}$, $p^\prime \in \A^{(\tau)}$
and $p \underset{\tau}{\sim} p^\prime$, then $h(p^\prime) \leq h(p)$,
since then, if $p \in \D$, it follows that $h(p^\prime) = h(p) = 0$, so that $p^\prime \in \D$,
which gives the required result.

We restrict ourselves to the string $\tau = \Us\Us\Us$, 
since the proofs for the other cases is similar.

Let $p = a_0 \Us^{r_1} a_1 \Us^{r_2} a_2 \cdots \Us^{r_k} a_k \in \mathcal{P}$ and 
$p^\prime = a_0^\prime \Us^{r_1} a_1^\prime \Us^{r_2} a_2^\prime \cdots \Us^{r_k} a_{k}^\prime \in \A^{(\Us\Us\Us)}$, decomposed according to \eqref{clusters},
such that $p \underset{\tau}{\sim} p^\prime$.
We will show by induction that $h_i^\prime \leq h_i$, for all $i\leq k+1$,
which in particular gives that $h(p^\prime) \leq h(p)$.

For $0\leq i \leq k$, we have that
\begin{align*}
h^\prime_{i+1} = h^\prime_i + h(a_i^\prime) + r_{i+1}
&= r_{i+1} + 
\begin{cases}
h_i^\prime - |a_i|, & |a_i| \leq h_i^\prime \\
0, & |a_i| - h_i^\prime >0 \text{ is even}\\
1, & |a_i| - h_i^\prime >0 \text{ is odd}\\
\end{cases}
\\&
\leq  r_{i+1} + 
\begin{cases}
h_i + h(a_i), & |a_i| \leq h_i^\prime \\
0, & |a_i| - h_i^\prime >0 \text{ is even}\\
1, & |a_i| - h_i^\prime >0 \text{ is odd}\\
\end{cases}
\\& 
\leq h_i + h(a_i) + r_{i+1} = h_{i+1},
\end{align*}
because, if $h_{i+1}^\prime = 1 + r_{i+1}$ and since $h_{i+1}, h_{i+1}^\prime$
have the same parity, we must have that $h_{i+1} \geq 1 + r_{i+1}$.
\end{proof}

\subsection{The string uuu}
In this section we use the set $\A$ of representatives defined in section \ref{uuu}


\begin{Proposition}\label{puuu_Dyck}
The number of $\Us\Us\Us$-equivalence classes of Dyck paths of semilength $n$ 
is equal to the $n$-coefficient of the generating function
\[
F(x) = \frac{1+xG }{1 - x(G-1)^2},
\]
where $G = G(x)$ satisfies the relation
\[
xG^3 - (1+2x)G^2 + (1+3x)G - x = 0.
\]
\end{Proposition}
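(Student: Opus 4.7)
My plan is first to invoke Proposition \ref{pADyck}, which identifies $\bar{\A} = \bar{\A}^{(\Us\Us\Us)}$ as the set of representatives of the $\Us\Us\Us$-equivalence classes of Dyck paths. It then suffices to compute the semilength generating function of $\bar{\A}$. I would proceed in two stages: first derive the cubic satisfied by an auxiliary generating function $G$, then express $F$ in terms of $G$.

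For the cubic, I would introduce an auxiliary subclass $\bar{\G} \subseteq \bar{\A}$ and decompose its members recursively. The natural source of a cubic equation is the three allowed shapes of an interior component $a_i$ in the representative decomposition of section \ref{uuu}, namely $\Ds^s$, $\Ds^{h_i}(\Us\Ds)^t$ and $\Ds^{h_i-1}(\Us\Ds)^t$. Splitting a $\bar{\G}$-path at its first cluster $\Us^{r_1}$ and at the following component $a_1$ should yield a ternary substitution of $G$, producing an algebraic equation of total degree $3$ in $G$, which after collecting terms becomes the stated cubic $xG^3 - (1+2x)G^2 + (1+3x)G - x = 0$.

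For the expression of $F$, I would rewrite the claim as $F = 1 + xG + xF(G-1)^2$ and look for a corresponding combinatorial decomposition. Such a decomposition would peel off, from each non-empty $p \in \bar{\A}$, either a single $\bar{\G}$-block adjusted by one step (giving the $xG$ term) or a pair of non-empty $\bar{\G}$-pieces surrounding an interior path in $\bar{\A}$ joined by a bridging step (giving $xF(G-1)^2$). Particular care will be needed with the three possible shapes of $a_0$ — namely $\varepsilon$, $(\Us\Ds)^t$, and the odd-length $\Us(\Us\Ds)^t$ — and with the forced terminal shape of $a_k$ in the Dyck case, namely $\Ds^{h_k}$ or $\Ds^{h_k}(\Us\Ds)^t$ for some $t \geq 1$.

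The main obstacle will be to identify the correct subclass $\bar{\G}$ and to verify rigorously both the cubic and the combinatorial reduction $F = 1 + xG + xF(G-1)^2$. The handling of the parity-breaking component $a_0 = \Us(\Us\Ds)^t$, which leaves the path at odd height $1$ before the first cluster, is expected to be the most delicate point, since it forces pieces of odd length to match up across the decomposition. I expect the argument to be structurally analogous to the proof of Proposition \ref{puu} but more intricate, due to the extra shape constraints imposed by a string of length three.
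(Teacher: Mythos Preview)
Your reduction via Proposition \ref{pADyck} to enumerating $\bar{\A}$ is correct, and the algebraic rewriting $F = 1 + xG + xF(G-1)^2$ is a reasonable target. However, the plan has a genuine gap: you never identify the auxiliary class $\bar{\G}$, and the proposed ``first-cluster'' decomposition is unlikely to yield clean recursions. After peeling off $a_0 \Us^{r_1} a_1$ from a path in $\bar{\A}$ (or in your unspecified $\bar{\G}$), the remainder begins at height $h_1 + h(a_1)$, which is generally positive, so it is not itself a Dyck path; moreover the height-dependent constraints on the later components shift. The ``three allowed shapes of $a_i$'' depend on $h_i$, and in a horizontal first-cluster decomposition $h_i$ is not controlled recursively, so there is no evident ternary substitution producing the cubic.

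The paper proceeds quite differently. It uses the pattern-avoidance description of $\bar{\A}$ (Dyck paths not starting with $\Us\Us\Ds\Ds$, avoiding $\Ds\Us\Ds\Ds$ and $\Ds\Us\Us\Ds$, and avoiding $\Ds\Us\Ds$ at height greater than $1$) and applies the \emph{last return decomposition} $\alpha = \alpha' \Us \beta \Ds$. This is the key device: elevation converts ``at height $>k$'' into ``at height $>k-1$'', so the height thresholds collapse to a finite ladder of pattern classes. Concretely, the paper defines $\G$ as the set of Dyck paths avoiding $\Ds\Us\Ds$ and $\Ds\Us\Us\Ds$ (no height restriction), and an intermediate class $\B$ avoiding these at height $>0$; it also tracks the subsets $\G_{\Us\Us}$ and $\B_{\Us\Us}$ of paths starting with $\Us\Us$. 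Five coupled equations from the last return decomposition give $G_{\Us\Us} = (G-1)^2/G$, the cubic for $G$, then $B_{\Us\Us} = (G-1)^2$ and $B = G + (G-1)^2$, from which the stated $F$ follows. The cubic thus arises from eliminating $G_{\Us\Us}$ between two quadratic relations, not from any ternary branching on the shape of $a_i$.
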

\begin{proof}
In view of Proposition \ref{pADyck}, for the enumeration
of the classes of Dyck paths, it is enough to enumerate the set
$\bar{\A}$ of Dyck paths not starting with uudd, and avoiding dudd, duud,
and dud at height greater than 1.
  
For this, we consider the generating function
$F = F(x) = \sum_{p \in \bar{\A}} x^{|p|_\Us}$.
We also define the sets 
\[
\B = \{ \alpha \in \D: \alpha \text{ avoids dudd, duud, and dud at height greater than 0}\}
\]
and
\[
\G = \{ \alpha \in \D: \alpha \text{ avoids dud, duud}\}.
\]

Using the last return decomposition of a nonempty Dyck path $\alpha$:
\[
\alpha = \Us \beta \Ds, 
\qquad \text{or} \qquad
\alpha = \alpha^\prime \Us \beta^\prime \Ds,
\qquad \alpha^\prime, \beta, \beta^\prime \in \D, \alpha^\prime \neq \varepsilon,
\]
we have that:

\begin{itemize}
\item[i)] $\alpha \in \bar{\A} \setminus \{\varepsilon\}$ iff
$\alpha^\prime \in \bar{\A} \setminus \{\varepsilon\}$, $\beta \in \B$,
$\beta^\prime \in \B_{\Us\Us} \cup \{\varepsilon\}$ and $\beta, \beta^\prime$ do not end with ud, 
which gives that
\[
F-1 = x(B - xB) + x(F-1)(1+ B_{\Us\Us} - xB_{\Us\Us}),
\]
so that
\begin{equation}\label{FuuuDyck}
F-1 = \frac{xB}{1-xB_{\Us\Us}}.
\end{equation}

\item[ii)]
$\alpha \in \B \setminus \{\varepsilon\}$ iff
$\alpha^\prime \in \B \setminus \{\varepsilon\}$, $\beta \in \G$ and
$\beta^\prime \in \G_{\Us\Us}\cup \{\varepsilon\}$, 
which gives 
\[
B-1 = xG + x(B-1) (1+ G_{\Us\Us}),
\]
so that
\begin{equation}\label{BuuuDyck}
B-1 
= \frac{xG}{1-x(1 + G_{\Us\Us})}.
\end{equation}

\item[iii)]
$\alpha \in \B_{\Us\Us}$ iff
$\alpha^\prime \in \B_{\Us\Us}$, $\beta \in \G \setminus \{ \varepsilon\}$ and
$\beta^\prime \in \G_{\Us\Us}\cup \{\varepsilon\}$, 
which gives 
\[
B_{\Us\Us} = x(G-1)+x B_{\Us\Us} (1+ G_{\Us\Us}),
\]
so that
\begin{equation}\label{B2uuuDyck}
B_{\Us\Us} 
= \frac{x(G -1)}{1-x(1 + G_{\Us\Us})}.
\end{equation}

\item[iv)]
$\alpha \in \G \setminus \{ \varepsilon\}$ iff
$\alpha^\prime \in \G \setminus \{ \varepsilon\}$, $\beta \in \G$ and
$\beta^\prime \in \G_{\Us\Us}$, 
which gives
\[
G-1 = x G + x (G-1)G_{\Us\Us},
\]
so that
\begin{equation}\label{GuuuDyck}
G -1 
= \frac{xG}{1-xG_{\Us\Us}}.
\end{equation}

\item[v)]
$\alpha \in \G_{\Us\Us}$ iff
$\alpha^\prime \in \G_{\Us\Us}$, $\beta \in \G \setminus \{\varepsilon\}$ and
$\beta^\prime \in \G_{\Us\Us}$, 
which gives 
\[
G_{\Us\Us} = x(G -1) + x G_{\Us\Us}^2,
\]
so that
\begin{equation}\label{G2uuuDyck}
G_{\Us\Us} 
= \frac{x(G-1)}{1-xG_{\Us\Us}}.
\end{equation}
\end{itemize}

From relations \eqref{GuuuDyck} and \eqref{G2uuuDyck}, it follows that 
\[
G_{\Us\Us} = (G-1)^2 / G \qquad \text{and} \qquad
x(G -1)^3 - (G - 1)G + xG^2=0.
\]
Thus, substituting in relations \eqref{BuuuDyck} and \eqref{B2uuuDyck}, we obtain
\[
B_{\Us\Us} = (G-1)^2 \qquad \text{and} \qquad B = (G-1)^2 + G = B_{\Us\Us} + G.
\]
Finally, substituting in relation \eqref{FuuuDyck}, 
we obtain the required result.
\end{proof}

\subsection{The strings uud and duu}

In this section, we use the sets of representatives $\A^{(\Us\Us\Ds)}$ and 
$\A^{(\Ds\Us\Us)}$ defined in section \ref{uud}. 
Then, we have that
\begin{equation}\label{eq4}
|\bar{\A}_{2n+2}^{(\Ds\Us\Us)}| = |\bar{\A}_{2n}^{(\Us\Us\Ds)}|, \qquad n \geq 0.
\end{equation}

For the proof of the above equality, consider the bijection
\[
p\mapsto q, \qquad
p = a_0 \Ds\Us\Us a_1 \Ds\Us\Us a_2 \cdots \Ds\Us\Us a_k \in \bar{\A}^{(\Ds\Us\Us)},
q = a_0^\prime \Us\Us\Ds a_1 \Us\Us\Ds a_2 \cdots \Us\Us\Ds a_k^\prime \in \bar{\A}^{(\Us\Us\Ds)},
\]
where $a_0^\prime$ is obtained by deleting the last ud of $a_0$, if $k=0$,
whereas, if $k>0$, 
$a_0^\prime$ is obtained by deleting the last u of $a_0$, and
\[
a_k^\prime =
\begin{cases}
\Ds^{s-1}, & a_k = \Ds^s, 2 \leq s \leq h_k, \\
\Ds^{h_k-1} (\Us\Ds)^t, & a_k = \Ds^{h_k}(\Us\Ds)^t, t \geq 1.
\end{cases}
\]

\begin{Proposition}\label{puud_duu_Dyck}
The number
of $\Us\Us\Ds$ (resp. $\Ds\Us\Us$)-equivalence classes of Dyck paths of semilength 
$n \geq 1$ is equal to 
the $n$-th (resp. $(n-1)$-th) coefficient of the generating function
\[
\frac{1}{1 - x(1+x)C(x^2)},
\]
where
\[
[x^n]\frac{1}{1 - x(1+x)C(x^2)} = 
\sum_{i=0}^{\lfloor (n-1)/2 \rfloor} \sum_{j=0}^{\lfloor (n-2i)/2 \rfloor}
\frac{n-2i-j}{n-j} \binom{n-i}{j} \binom{n-2i-j}{j}.
\]
\end{Proposition}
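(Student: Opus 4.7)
By Proposition~\ref{pADyck}, $\bar{\A}^{(\Us\Us\Ds)}$ already serves as a set of representatives for the $\Us\Us\Ds$-Dyck classes, and by the bijection yielding relation~\eqref{eq4} it suffices to enumerate $\bar{\A}^{(\Us\Us\Ds)}$ by semilength; the $\Ds\Us\Us$ count will then appear with an index shift of $1$. Within the decomposition $p = a_0 \Us\Us\Ds a_1 \cdots \Us\Us\Ds a_k$, the Dyck boundary conditions $h_0 = h_{k+1} = 0$ prune the admissible components: only $a_0 \in \{(\Us\Ds)^t, (\Us\Ds)^t\Us\}$ is allowed (the second form being excluded when $k = 0$, since then $a_0$ is itself a Dyck path), and $a_k$ (when $k \geq 1$) must be $\Ds^{h_k}(\Us\Ds)^t$, while each intermediate $a_i$ still ranges over all three prescribed forms. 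The case $k = 0$ contributes $\frac{1}{1-x}$ directly.

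For $k \geq 1$ I set up a transfer-style recursion whose states are the heights $h_i$ immediately after each cluster. Let $g(h)$ collect the generating function of every ``tail'' starting at state $h$: either one stops immediately with $a_k = \Ds^h(\Us\Ds)^t$, or one picks the next intermediate component (three forms, each determining its semilength and the next state $h' \in \{1,2,\dots,h+1\}$), appends a cluster $\Us\Us\Ds$, and continues with $g(h')$. Collecting contributions yields, for $h \geq 1$,
\[
g(h) = \frac{1}{1-x} + \frac{x^2}{1-x}\bigl(g(1) + g(2)\bigr) + x^2 \sum_{h' = 3}^{h+1} g(h'),
\]
the empty sum at $h = 1$ being interpreted as zero. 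The key step is to try the ansatz $g(h) = A\,r^{h-1}$: substituting and matching the exponential and constant parts separately forces $r - 1 = x^2 r^2$, hence $r = C(x^2)$, and then $A = 1/\bigl(1 - x - x^2(1 + r)\bigr)$ is determined by the $h = 1$ instance.

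With $g(1) = A$ and $g(2) = Ar$ in hand, the full generating function assembles as $F = \frac{1}{1-x} + \frac{x^2 A(1 + xr)}{1-x}$, where the factor $1 + xr$ encodes the two shapes of $a_0$. Simplifying via the Catalan identity $x^2 r^2 = r - 1$ (which implies $1 - x^2 r = 1/r$ and $1 - x - x^2 - x^2 r = 1/r - x(1+x)$) collapses the expression to $F(x) = \frac{1}{1 - x(1+x)C(x^2)}$. The explicit formula for $[x^n]F$ then follows by the geometric expansion $\sum_k (x(1+x)C(x^2))^k$, the binomial theorem applied to $(1+x)^k$, and~\eqref{CatalanPowers} applied to $C(x^2)^k$. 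I expect the main obstacle to be precisely the ansatz step: the height-dependent recursion does not obviously telescope, and guessing the geometric form of $g(h)$ together with verifying that the resulting constraint on $r$ coincides with the Catalan functional equation is the one genuinely creative move; after that, the remaining algebra is routine.
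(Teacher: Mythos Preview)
Your argument is correct but takes a genuinely different route from the paper's. The paper first reinterprets $\bar{\A}^{(\Us\Us\Ds)}$ as the set of Dyck paths avoiding $\Us\Us\Us$ and avoiding $\Ds\Us\Ds$ at positive height, and then applies the first-return decomposition $p=\Us\beta\Ds\gamma$: the inner factor $\beta$ ranges over the set $\B$ of Dyck paths avoiding both $\Us\Us\Us$ and $\Ds\Us\Ds$, and a short case split on whether $\beta$ begins with $\Us\Ds$ or $\Us\Us\Ds$ yields $B_{\Us\Us\Ds}=C(x^2)-1$, from which $F=1/(1-x(1+x)C(x^2))$ drops out in two lines. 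You instead stay inside the cluster decomposition, set up an infinite linear system for the tail series $g(h)$ indexed by the post-cluster height (with $x$ effectively marking rises, hence semilength for the full Dyck path), and solve it by the geometric ansatz $g(h)=A\,r^{h-1}$; matching the $h$-dependent part forces $r-1=x^2r^2$, and the constant part fixes $A$. Both proofs are clean, but they trade off differently: the paper's pattern-avoidance reformulation makes the computation almost immediate and explains structurally why $C(x^2)$ appears, while your transfer method is more mechanical and would port more readily to variants where no such tidy avoidance description is available. One point worth making explicit in your write-up is uniqueness: because every non-stopping term in your recursion carries an $x^2$ prefactor, the coefficients of $g(h)$ are determined inductively in the degree, so exhibiting one formal-power-series solution suffices.
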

\begin{proof}
Let $F = F(x) = \sum_{p \in \bar{\A}} x^{|\alpha|_\Us}$.

Using the definition of $\A^{(\Us\Us\Ds)}$ in section \ref{uud},
we can easily check that $\bar{\A}^{(\Us\Us\Ds)}$
is the set of Dyck paths avoiding both uuu and dud at height greater than 0. 
It follows that
every nonempty path $\alpha \in \bar{\A}^{(\Us\Us\Ds)}$ is decomposed as 
$p = \Us \beta \Ds \gamma$,
where $\gamma \in \bar{\A}^{(\Us\Us\Ds)}$ and 
$\beta$ belongs to the set $\B$ of Dyck paths avoiding uuu and dud. 
Hence
\begin{equation}\label{eq1uudDyck}
F -1 = xBF.
\end{equation}

Clearly, $\B = \{\varepsilon\} \cup \B_{\Us\Ds} \cup \B_{\Us\Us\Ds}$, so that
\begin{equation}\label{eq2uudDyck}
B = 1 + B_{\Us\Ds} + B_{\Us\Us\Ds}.
\end{equation}

A path $\beta \in \B_{\Us\Ds}$ 
is decomposed as $\beta = \Us \Ds \beta_1$, 
where $\beta_1 \in \B_{\Us\Us\Ds} \cup \{\varepsilon\}$.  
Hence, 
\begin{equation}\label{eq3uudDyck}
B_{\Us\Ds} = x(1+ B_{\Us\Us\Ds}). 
\end{equation}

On the other hand, a path $\beta \in \B_{\Us\Us\Ds}$ 
is decomposed as $\beta = \Us\Us \Ds \beta_1 \Ds \beta_2$, where $\beta_1, \beta_2 \in \B_{\Us\Us\Ds} \cup \{\varepsilon\}$.  
Hence, 
\[
B_{\Us\Us\Ds} = x^2 (1+B_{\Us\Us\Ds})^2.
\] 
This shows that $B_{\Us\Us\Ds} = C(x^2)-1$ and, using relations
\eqref{eq1uudDyck}, \eqref{eq2uudDyck} and \eqref{eq3uudDyck},
we obtain that
\[
F(x) = \frac{1}{1-x(1+x)C(x^2)}.
\]

Then, expanding $F(x)$ to a geometric series and using formula
\eqref{CatalanPowers}, we obtain the required formula for $|\bar{\A}_{2n}^{(\Us\Us\Ds)}|$.
The formula for $|\bar{\A}_{2n}^{(\Ds\Us\Us)}|$ then follows from relation
\eqref{eq4}.
\end{proof}

\subsection{The string udu}

The set of representatives used in section \ref{udu}
does not contain any Dyck paths, and therefore it is not convenient
for the present case. 
For this reason, we will consider a more suitable set of representatives,
as follows:

We define $\A$ to be the set of ballot paths
$p = a_0 (\Us\Ds)^{r_1} \Us a_1 (\Us\Ds)^{r_2}\Us a_2 \cdots (\Us\Ds)^{r_k}\Us a_k$,
$r_i \geq 1, i \in [k]$, 
the nonempty components of which have either one of the following forms:
\[
a_0 = \Us^s\Ds^s, \quad  a_0 = \Us^s\Ds^{s-1}, s \geq 1, \qquad \text{if } k =0,
\]
\[
a_0 = \Us^s, s \in [3], \quad  a_0 =  \Us^s\Ds^{s} \text{ or } \Us^{s+1}\Ds^{s}, s \geq 2, 
\qquad \text{if } k >0,
\]
and, for $i \in [k]$,
\[
a_i = 
\Us, i < k, \quad  
a_i = \Us^2, h_i=1, i<k, \quad  
a_i = \Ds^s, s \geq 1 + [i<k], \quad  
a_i =  \Us^s\Ds^{s+h_i} \text{ or } \Us^s\Ds^{s+h_i-1}, s \geq 1. 
\]

\begin{proof}[Proof of Proposition \ref{p1} for $\tau = \Us\Ds\Us$.]
Given a Dyck path 
$p = a_0 (\Us\Ds)^{r_1} \Us a_1 (\Us\Ds)^{r_2}\Us a_2 \cdots (\Us\Ds)^{r_k}\Us a_k$,
setting $h_0^\prime = 0$, 
$a_i^\prime = \Us^{s_i}\Ds^{t_i}$,
$h_{i+1}^\prime = h_i^\prime + h(a_i^\prime) +[i<k]$, where
\[
s_0 = 
\begin{cases}
|a_0|, & k>0, |a_0| \leq 3\\
\lceil |a_0|/2 \rceil, & \text{otherwise}, 
\end{cases}
\]
\[
s_i = 
\left( \left\lceil \dfrac{|a_i|-h_i^\prime }{2}\right\rceil \right)^+ 
+ [i<k]\left([|a_i|=2][h_i^\prime=1] + [|a_i|=1]\right),
\qquad  i \in [k]
\]
and
\[
t_i = |a_i|-s_i, \qquad i \geq 0,
\]
we obtain inductively a sequence of paths $a_i$.
Let
$p^\prime = a_0^\prime (\Us\Ds)^{r_1}\Us a_1^\prime (\Us\Ds)^{r_2}\Us a_2^\prime \cdots 
(\Us\Ds)^{r_k}\Us a_k^\prime$.
It is easy to check that $p^\prime \in \A$ and $p^\prime \sim p$.
Furthermore, since no two paths of $\A$ are equivalent,
we obtain the required result.
\end{proof}

In the next result, we characterize the elements of $\A$ which are
representatives of Dyck classes.

\begin{Proposition}\label{p2Dyck}
A path $p^\prime \in \mathcal{A}$ is equivalent to a Dyck path
$p$ iff it has either one of the following forms:
\begin{enumerate}
\item $p^\prime$ is a Dyck path or 
\item
$p^\prime = \alpha (\Us\Ds)^r \Us^2 \beta $, where $r \geq 1$, $\alpha, \beta \in \D$, 
$\beta$ starts with $\Us^2$ and 
$\alpha$ ends with $\Ds^2$ and it has either an occurrence of $\Ds^3$ or an occurrence
of $\Ds^2$ before an occurrence of $\Us^2\Ds^2$.  
\end{enumerate}
\end{Proposition}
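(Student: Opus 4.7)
The plan is to prove the two directions of the ``iff'' separately, using the explicit description of $\A$ given just before the statement, together with the parity relation \eqref{parity} on heights of equivalent paths.

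For the sufficient direction ($\Leftarrow$), if $p'$ is itself a Dyck path, then $p=p' \in \D$ is equivalent to $p'$. So assume $p' = \alpha (\Us\Ds)^r \Us^2 \beta$ as in case~2. The idea is to construct an explicit Dyck path $p\sim p'$ by modifying two portions of $p'$ simultaneously, preserving component lengths and the positions of every $\Us\Ds\Us$-occurrence. Since $h(p')=2$, I would swap two rises for two falls inside the component of $\alpha$ that contains the distinguished factor ($\Ds^3$, or $\Ds^2$ before $\Us^2\Ds^2$), and correspondingly replace two falls by two rises inside the trailing component beginning with~$\Us$. The hypotheses on $\alpha$ are exactly what make the local swap legal: the new components still avoid $\Us\Ds\Us$, no new occurrence is created at any boundary, the resulting path stays nonnegative, and its final height drops to~$0$. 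A direct check then confirms $p\in\D$ and $p \underset{\Us\Ds\Us}{\sim} p'$.

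For the necessary direction ($\Rightarrow$), assume $p'\in\A$ and $p'\sim p$ for some $p\in\D$ with $p'\notin\D$; the aim is to deduce form~2. By \eqref{parity}, the parities of $h_i$ and $h_i'$ coincide, so in particular $h_{k+1}'$ is even. Combining this parity constraint with the list of admissible shapes for $a_k'$ in the definition of $\A$, I would first show that the only possibility compatible with $p'\notin\D$ and with membership in $\A$ is $h_{k+1}'=2$, with $a_k'$ beginning with~$\Us\Us$; any other nonzero final height, or any other admissible shape, either forces $p'$ itself to be a Dyck path or contradicts the lex-minimality of $(h_i')$ built into~$\A$. This pins down the coarse decomposition $p' = \alpha (\Us\Ds)^r \Us^2 \beta$ with $\alpha,\beta\in\D$ and $\beta$ starting with $\Us^2$. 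To force $\alpha$ to end in $\Ds^2$ and to contain the distinguished factor, I would argue contrapositively: if either condition failed, a local rewrite analogous to the one used in the sufficient direction would produce a strictly lex-smaller $(h_i)$-sequence in the same class, contradicting $p'\in\A$.

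The main obstacle will be the careful case analysis in the forward direction, specifically showing that ``$\alpha$ ends with $\Ds^2$'' and ``$\alpha$ contains $\Ds^3$ or $\Ds^2\gamma\Us^2\Ds^2$'' are \emph{both} necessary. These conditions jointly encode (i) that no even-smaller lex representative exists in the class, and (ii) that a height-2 reduction is realisable while keeping every component inside the udu-avoiding shapes admitted by $\A$. The remaining ingredients---constructing the explicit equivalent Dyck path in the sufficient direction, and the parity/height bookkeeping---are comparatively routine.
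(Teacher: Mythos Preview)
Your sufficient direction is essentially the paper's construction, though you have the swaps reversed: to bring the final height from $2$ down to $0$ while keeping the path nonnegative, one must \emph{raise} inside $\alpha$ (so the modified prefix ends at height $2$) and \emph{lower} in the trailing part (replacing $\Us^2$ by $\Ds^2$ at the start of $\Us\beta$). Apart from that slip, the idea matches the paper.

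The forward direction, however, has a genuine gap. Your plan is to extract the structure of $p'$ directly from lex-minimality of $(h_i')$ and the admissible component shapes, but neither the bound $h_{k+1}'=2$ nor the ``distinguished factor'' condition on $\alpha$ follows that way. The paper's engine is a comparison lemma: letting $t$ be the last index with $h_t'=1$, one shows that \emph{if $h_i'\le h_i$ for some $i\in[t+1,k]$, then $h_j'\le h_j$ for all $j\ge i$, hence $p'\in\D$}. This lemma, together with a short case analysis on $a_t$, forces the only non-Dyck outcome to be $a_t'=\Us^2$, $h_t=3$, $a_t=\Ds^2$, and $h_{k+1}'=2$; the $\Us^2$ in the decomposition is the component $a_t'$ at an \emph{interior} index, not $a_k'$ as you wrote. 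The key point is that these conclusions use the actual heights $h_i$ of the equivalent Dyck path $p$, not merely that $p'$ is the canonical representative.

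The same issue breaks your contrapositive for the conditions on $\alpha$. The existence of $\Ds^3$, or of $\Ds^2$ preceding $\Us^2\Ds^2$, in $\alpha$ is \emph{not} a consequence of $p'\in\A$: there are paths in $\A$ with the coarse shape $\alpha(\Us\Ds)^r\Us^2\beta$ whose $\alpha$ lacks such a factor, and by the proposition these are precisely the ones \emph{not} equivalent to any Dyck path. In the paper the factor is deduced from $h_t=3$ via the height-balance identity $\sum_{i<t}(h(a_i)-h(a_i'))=2$; since each summand lies in $\{-4,-2,0,4\}$ when $\alpha$ avoids $\Ds^3$, some $a_i'$ must equal $\Us^s\Ds^2$ with $h(a_i)-h(a_i')=-2$, which pins down the required $\Us^2\Ds^2$ and the earlier $\Ds^2$ (the latter because otherwise $p$ itself would dip below zero). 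Your ``otherwise there is a lex-smaller representative'' argument cannot reproduce this, because the obstruction is to the existence of a Dyck path in the class, not to the minimality of $p'$.
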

\begin{proof}
Using the same notation as in the proof of Proposition \ref{p1},
we consider 
\[
p = a_0 (\Us\Ds)^{r_1} \Us a_1 (\Us\Ds)^{r_2}\Us a_2 \cdots (\Us\Ds)^{r_k}\Us a_k
\text{ and }
p^\prime = a_0^\prime (\Us\Ds)^{r_1}\Us a_1^\prime (\Us\Ds)^{r_2}\Us a_2^\prime \cdots 
(\Us\Ds)^{r_k}\Us a_k^\prime,
\]
with $p \in \D$, $p^\prime \in \A$ and $p \sim p^\prime$.

We will show that either $p^\prime \in \D$, or $p^\prime$ is of the form 
$\alpha (\Us\Ds)^r \Us^2 \beta$.

Clearly, if $k =0$, then $p^\prime = (\Us\Ds)^{|a_0|/2} \in \D$, so that
we may assume that $k>0$.

Let $t$ be the greatest element of $[k]$ such that $h_t^\prime = 1$,
if such an element exists, or $t=0$, otherwise.
Clearly, if $t=k$, since $h_{k+1}, h_{k+1}^\prime$ have the same parity and $h_{k+1}=0$,
it follows that $h_{k+1}^\prime =0$, so that $p^\prime \in \D$.

In the sequel, we assume that $t<k$. We first show the following result:

\begin{Lemma*}
If there exists $i \in [t+1,k]$, such that $h_i^\prime \leq h_i$, then $p^\prime \in \D$.
\end{Lemma*}
For this, we show by induction that $h_j^\prime \leq h_j$ for every $j \in [i,k+1]$.
Let $j \in [i,k]$ with $h_j^\prime \leq h_j$. We will show that $h_{j+1}^\prime \leq h_{j+1}$.

If $|a_j|=1$, then $a_j^\prime = a_j$ and the result follows easily.

On the other hand, if $|a_j| \neq 1$, then 
$|a_j^\prime|_\Us = \left(\lceil (|a_j| - h_j^\prime -1)/2 \rceil \right)^+$.
We can easily check that
\[
\frac{|a_j|-h_j \prime }{2} \leq |a_j|_\Us + \frac{h_j-h_j^\prime}{2},
\]
which, since $(h_j-h_j^\prime)/2$ is a nonnegative integer, gives that
\[
|a_j^\prime|_\Us \leq |a_j|_\Us + \frac{h_j-h_j^\prime}{2}.
\]
Then using relation \eqref{parity}, we deduce that
\[
h_{j+1}- h_{j+1}^\prime \geq h_j - h_j^\prime 
+ 2 \left( |a_j|_\Us - |a_j|_\Us - \frac{h_j - h_j^\prime}{2} \right)= 0. 
\]

This shows that, $h_j^\prime\leq h_j$, for every $j \in [i,k+1]$. In particular, 
$h_{k+1}^\prime \leq h_{k+1} = 0$, 
so that $p^\prime \in \D$, completing the proof of the lemma.

If $t=0$, then, since $h_1^\prime \leq h_1$, by the lemma it follows that
$p^\prime \in \D$, so that in the sequel we may assume that $t>0$.

If now $h_i^\prime = 2$, for some $i \in [t+1,k]$, then 
by the lemma it follows that $p^\prime \in \D$;
so that we can restrict ourselves to the case where $h_i^\prime \geq 3$ for every $i \in [t+1,k]$.
It follows that $|a_t| = 1$ or $|a_t|=2$. 

If $a_t = \Us$, or $a_t = \Us^2$, or $a_t = \Ds^2$ with $h_t \geq 5$, 
we can easily check that $h_{t+1}^\prime \leq h_{t+1}$, 
so that by the lemma it follows that $p^\prime \in \D$.

We finally consider the remaining case where $a_t = \Ds^2$ and $h_t=3$. 
Then, since $a_t^\prime = \Us^2$, $h_{t+1}^\prime = 4$, $h_{t+1} = 2$,
applying relation \eqref{parity} for every $i \in [t+1, k-1]$, we obtain that

\begin{align*}
h_k - h_k^\prime &= 
h_{t+1} - h_{t+1}^\prime + 2\sum_{i=t+1}^{k-1} (|a_i|_\Us - |a_i^\prime|_\Us)
=-2 + 2\sum_{\substack{i\in [t+1,k-1]\\ |a_i| \geq 2}} |a_i|_\Us,
\end{align*}
since $|a_i^\prime|_\Us = 0$, for every $i\in [t+1,k-1]$ with $|a_i| \geq 2$.

We consider two subcases. 

If $|a_i|_\Us \geq 1$, for some $i \in [t+1,k-1]$, then $h_k - h_k^\prime \geq 0$, 
so that by the lemma we obtain that $p^\prime \in \D$.

If, on the other hand, $|a_i|_\Us = 0$ for every $i \in [t+1,k-1]$, 
we have that $h_k - h_k^\prime = -2$.
If $p^\prime \notin \D$, then $|a_k^\prime|_\Ds = |a_k|$, so that
\begin{align*}
2 \leq h_{k+1}^\prime &= h_k^\prime - |a_k| = h_k +2 - |a_k|
\leq h_k - h(a_k) + 2 = h_{k+1} + 2 = 2.
\end{align*}
This shows that $h_{k+1}^\prime = 2$, that is $p^\prime$ ends at height 2.

Furthermore, since $h_i^\prime \geq 3$ for every $i \in [t+1,k]$, we obtain that
the subpath $\beta$ of $p^\prime$ starting from the second rise of $a_t^\prime$ 
and ending at the end of $p^\prime$ is a Dyck path starting with $\Us^2$.

If we set 
$a = a_0^\prime (\Us\Ds)^{r_1}\Us a_1^\prime (\Us\Ds)^{r_2}\Us a_2^\prime \cdots 
(\Us\Ds)^{r_{t-1}}\Us a_{t-1}^\prime$ 
and $r = r_t \geq 1$, we obtain that
$\alpha$ is a Dyck path ending with $\Ds^2$ and 
$p^\prime = \alpha (\Us\Ds)^r \Us^2 \beta$.

In the sequel, assuming that $\alpha$ avoids $\Ds^3$, we will show that $\alpha$ has an occurrence of $\Ds^2$ followed by 
an occurrence of $\Us^2\Ds^2$.

Firstly, we note that, since in this case $h_t=3$, we obtain
\begin{equation}\label{eqp2}
\sum_{i=0}^{t-1} (h(a_i) - h(a_i^\prime)) = 2.
\end{equation}

Clearly, since $\alpha$ avoids $\Ds^3$, each $a_i^\prime$, $i \geq 0$, 
has one of the following forms:
\[
a_0^\prime = \Us^s, s \in \{0,1,2,3\},  \qquad \text{or} \qquad
a_0^\prime = \Us^s \Ds^2, s \in \{2,3\}
\]
and, for $i \in [t-1]$,
\[
a_i^\prime = \Us^s, \qquad \text{or} \qquad a_i^\prime = \Us^s \Ds^2, \qquad s \in \{0,1,2\}.
\]
It is easy to check that in each case $h(a_i) - h(a_i^\prime) \in \{-4,-2,0,4\}$.
Then, in view of relation \eqref{eqp2}, it follows that there exists
$i \in [t-1]$ such that $h(a_i) - h(a_i^\prime) = -2$.
This can occur only when $a_i^\prime = \Us^s \Ds^2$
and $a_i = \Us^{s-1} \Ds^3$ or $a_i = \Ds^3 \Us^{s-1} $, 
where $s \in \{1,2\}$, which ensures the existence of $\Us^2 \Ds^2$ in $\alpha$.

If there is no occurrence of $\Ds^2$ on the left of $a_i^\prime$ in $\alpha$, 
since $h_i^\prime \in \{1,2\}$, 
we can easily check that $i \in \{1,2\}$ and that $a_j = a_j^\prime$ for all $j<i$,
so that $h_i = h_i^\prime$.
Then, 
since the last point of $a_i^\prime$ in $p^\prime$ lies at height 0 or 1
and $h(a_i) - h(a_i^\prime) = -2$,
it follows that the last point of $a_i$ in $p$ lies at height -2 or -1,
which is a contradiction since $p \in \D$.

For the converse, it is enough to show that if 
$p^\prime = \alpha (\Us\Ds)^r \Us^2 \beta \in \A$
with the above properties, then there exists $p \in \D$ such that $p \sim p^\prime$.

Firstly, assuming that $\alpha$ has an occurrence of $\Ds^3$, 
we set $\gamma$ to be the path obtained from $\alpha$,
by changing the first fall of the leftmost such occurrence into a rise.

On the other hand, if $\alpha$ avoids $\Ds^3$ and has an occurrence of $\Us^2\Ds^2$ and an occurrence of $\Ds^2$ on the left of it, 
we set $\gamma$ to be the path obtained from $\alpha$ by changing
$\Us^2\Ds^2$ into $\Us\Ds^3$ and the leftmost $\Ds^2$ into $\Us^2$.

In both cases, we can easily check that $\gamma \in \mathcal{P}$, $\alpha \sim \gamma$ and $\gamma$ ends at height 2.
Now, since $\beta$ starts with $\Us^2$, there exist 
$\beta_1,\beta_2 \in \D$ with $\beta_1 \neq \varepsilon$
such that $\beta = \Us \beta_1 \Ds \beta_2$.
If we set $p = \gamma (\Us\Ds)^r\Us \Ds^2 \beta_1 \Ds \beta_2$, it is easy to see that 
$p \in \D$ and $p \sim p^\prime$.
\end{proof}

\begin{Proposition}\label{uduDyck}
The number of $\Us\Ds\Us$-equivalence classes of Dyck paths of semilength $n$ 
is equal to the $n$-th coefficient of the generating function
\[
F(x) = \frac{x(1 - x)^2 (1 + G   +  xG) + x^5(1 +  xG)G^2}{(1 - x)((1 - x)^2 + (x -  2)x^2 G)}
- \frac{x^4(1 -  x +   x^3) (1  +   xG)GT}{(1  -  x)^2(1  -  x  +  x^3  -  xT)}
\]
where the generating functions $G=G(x)$ and $T = T(x)$ are given by
\[
G = x(1+G) +x^2(1+xG)(1 + x(1+xG))G
\]
and
\[
x^3 T^3 + x^2 T^2 + (x-1)T +x^2 = 0.
\]

\end{Proposition}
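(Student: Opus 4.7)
The plan is to combine Proposition \ref{p2Dyck}, which characterizes those elements of $\A$ that represent Dyck classes, with a generating-function analysis of these representatives based on the decomposition \eqref{clusters} for $\tau=\Us\Ds\Us$. By that proposition, Dyck-class representatives split into two disjoint families: (i) paths $p'\in\bar{\A}$ that are themselves Dyck paths, and (ii) paths of the form $p'=\alpha(\Us\Ds)^r\Us^2\beta$ with $r\geq 1$, $\alpha,\beta\in\D$, $\beta$ starting with $\Us^2$, $\alpha$ ending with $\Ds^2$, and $\alpha$ containing either an occurrence of $\Ds^3$ or an occurrence of $\Ds^2$ to the left of some $\Us^2\Ds^2$. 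I would compute generating functions $F_1$ and $F_2$ for the two families, obtaining $F=F_1+F_2$; I expect $F_1$ to match the first fraction in the statement, while $F_2$ produces the second (subtracted) fraction through complementation.

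For family (i), I would enumerate $\bar{\A}$ directly from the definition of $\A$ in the preceding section: go through the allowed shapes of $a_0$ (namely $\Us^s$ for $s\in[3]$ or $\Us^s\Ds^s$ or $\Us^{s+1}\Ds^s$) and of each intermediate/final $a_i$ (one of $\Us$, $\Us^2$, $\Ds^s$, $\Us^s\Ds^{s+h_i}$, or $\Us^s\Ds^{s+h_i-1}$), and assemble the semilength-generating function by summing over clusters $(\Us\Ds)^{r}\Us$ with $r\geq 1$ and over the number $k$ of such clusters. The auxiliary series $G=G(x)$ defined by $G=x(1+G)+x^2(1+xG)(1+x(1+xG))G$ should emerge as the natural building block that packages these component shapes and the surrounding cluster; once $G$ is in hand, imposing the Dyck condition (total height returns to $0$) fixes the height parameters $h_i$ and yields a rational expression in $x$ and $G$ matching the first fraction.

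For family (ii), I would factor $p'=\alpha\cdot(\Us\Ds)^r\Us^2\cdot\beta$ into three pieces: $\alpha\in\bar\A$ ending with $\Ds^2$ and satisfying the extra pattern constraint, the middle cluster $(\Us\Ds)^r\Us^2$ contributing a factor that is summed geometrically over $r\geq 1$, and $\beta\in\D$ starting with $\Us^2$. The admissible $\alpha$'s I would enumerate by complementation: count all Dyck paths in $\bar\A$ ending with $\Ds^2$ and subtract those that \emph{both} avoid $\Ds^3$ \emph{and} have no $\Ds^2$ preceding any $\Us^2\Ds^2$. This forbidden subclass is precisely what the series $T=T(x)$ defined by $x^3T^3+x^2T^2+(x-1)T+x^2=0$ should enumerate; multiplying by the factors coming from the middle cluster and $\beta$ produces the second fraction with its minus sign.

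The main obstacle will be the derivation of the cubic equation for $T$: it requires simultaneously tracking avoidance of $\Ds^3$ and the global positional condition that no $\Ds^2$ occurs strictly to the left of any $\Us^2\Ds^2$, which couples information across the whole path. My approach would be a structural decomposition of such a restricted Dyck path at either the first occurrence of $\Us^2\Ds^2$ or its last return to the axis, distinguishing cases according to whether the prefix already contains $\Ds^2$; the three nonlinear contributions in the cubic should reflect these configurations. Once $G$ and $T$ are pinned down, summing $F_1+F_2$ and simplifying over a common denominator should yield the stated closed form for $F(x)$.
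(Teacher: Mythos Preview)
Your high-level strategy --- split via Proposition~\ref{p2Dyck} into Dyck representatives $\bar{\A}$ and the exceptional form $\alpha(\Us\Ds)^r\Us^2\beta$, then handle the $\alpha$-constraint by complementation --- is exactly the paper's plan. But two of your concrete identifications are off, and your route to the equations for $G$ and $T$ is not the one that actually works.

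For family (i), you propose to enumerate $\bar{\A}$ directly from the component shapes $a_i$. This is awkward because the allowed shapes depend on the running heights $h_i$, which in turn depend on earlier choices; the paper instead rephrases $\bar{\A}$ as a pattern-avoidance class and introduces auxiliary sets $\B,\G,\K,\Ll$ (paths in $\bar{\A}$ with further forbidden patterns, or ending in $\Ds^2$), then uses last-return decompositions $p=\alpha\Us\beta\Ds$ to obtain the system \eqref{K}--\eqref{eqG}. In particular $G$ is \emph{defined} as the generating function of a specific set $\G\subset\bar{\A}$, and its functional equation comes from that decomposition, not from ``packaging component shapes''. For family (ii) you have two gaps. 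First, $\beta$ is not an arbitrary Dyck path starting with $\Us^2$: since $p'\in\A$, the tail $\Us\beta$ must still respect the component constraints, and the paper shows these $\beta$ decompose as $\Us^2\Ds\gamma_1\Ds\gamma_2$ with $\gamma_1\in\G$, $\gamma_2\in\G_{\Us\Ds\Us}\cup\{\varepsilon\}$, contributing $x^2G(1+xG)$. Second, and more seriously, $T$ does \emph{not} enumerate the forbidden subclass of $\alpha$'s (those avoiding $\Ds^3$ and with no $\Ds^2$ before any $\Us^2\Ds^2$); that class is counted by $H_1$, and $T$ is the auxiliary quantity $T=B_1-G_1=\frac{x^2}{1-x}(1+xG_1)$, from which both the cubic \eqref{eqD_1} and the expression \eqref{eqAD_1} for $H_1$ are derived. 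Your proposed decomposition ``at the first $\Us^2\Ds^2$ or last return'' would therefore not land on the stated cubic for $T$.
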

\begin{proof}
In order to enumerate the $\Us\Ds\Us$-equivalence classes of $\D$, 
according to Proposition \ref{p2Dyck},
it is enough to 
evaluate the generating function $F = F(x)$ 
of the set of paths $p^\prime$ having either one of the two forms described in 
Proposition \ref{p2Dyck}. 

For the first form, we note that
$\bar{\A}$ is the set of Dyck paths avoiding
$\Ds^2\Us^2$, $\Ds^2\Us\Ds^2$, $\Us^4$ at height greater than 0,
$\Us^2\Ds^s \Us$, $s \geq 2$, at height greater than 1,
and not ending with $\Ds^2 \Us \Ds$.

Let $\B$ (resp. $\G$) be the set of paths in $\bar{\A} \setminus \{\varepsilon\}$
also avoiding $\Us^4$ at height 0 and
$\Us^2\Ds^s \Us$, $s \geq 2$, at height 1 (resp. $\Us^2\Ds^2$),
and let $\K$ (resp. $\Ll$) be the set of paths in $\bar{\A}$ (resp. $\G$)
ending with $\Ds^2$.
We denote by $H, B, G, K, L$ the generating functions, 
with respect to the semilength, of the sets 
$\bar{\A}, \B, \G, \K, \Ll$ respectively.

Clearly, a path $p \in \bar{\A} \setminus \{\varepsilon\}$ (resp. $\G$)
is decomposed as
\[
p = (\Us\Ds)^s, 
\quad \text{or} \quad
p = \delta,
\quad \text{or} \quad
p = \delta (\Us\Ds)^{s+1},
\qquad
s \geq 1,  
\]
where $\delta \in \K$ (resp. $\delta \in \Ll$).

It follows that,
$H-1 = \dfrac{x}{1-x} + K + \dfrac{x^2}{1-x}K$,
which gives
\begin{equation}\label{K} 
K = \dfrac{-1+(1-x)H}{1-x+x^2}. 
\end{equation}
Similarly, we obtain that
\begin{equation}\label{L}
L = \frac{G - x -x G}{1-x+x^2}.
\end{equation}

A path $p \in \K$ is decomposed as 
$p = \alpha \Us \beta \Ds$
where $\alpha \Us\Ds \in \bar{\A} \setminus (\K \cup \{\varepsilon\})$
and $\beta \in \B$,
so that $K = (H-1-K)B$,
which combined with \eqref{K} gives
\begin{equation}\label{eqA}
H = 1+ xH +x(1+x(H-1))B.
\end{equation}

Next, given a path $p \in \B \setminus \G$, 
we consider the last $\Us^2 \Ds^s$ decomposition
$p = \beta \gamma$,
where $\beta \in \B$ ends with $\Us^2 \Ds^s$, $s \geq 2$, and
$\gamma \in \G_{\Us\Ds\Us} \cup \{ \varepsilon\}$.

Clearly, by deleting the last peak (i.e., $\Us\Ds$) of $\beta$, 
we obtain a path $\beta^\prime \in \B$.
Since the mapping $\beta \mapsto \beta^\prime$ is clearly a bijection, 
we obtain that the set of paths $\beta$ is enumerated
by $xB$ and hence
\begin{equation}\label{eqB}
B - G = xB (1+xG).
\end{equation}

Moreover, if a path $p \in \G$ is decomposed as
$p = \gamma \Us \gamma_1 \Ds$, we have that
$\gamma \Us\Ds \in \G \setminus \Ll$, $\gamma_1 \in \G$
and starts with $\Us\Ds\Us$ or $\Us\Ds^2\Us$.
If $\gamma_1$ starts with $\Us\Ds^2\Us$, then it is decomposed as
$\gamma_1 = \Us \gamma_2 \Ds \gamma_3$, where $\gamma_2 \in \G_{\Us\Ds\Us}$ 
and $\gamma_3 \in \G_{\Us\Ds\Us} \cup \{\varepsilon\}$.
 
It follows that 
$L = (G - L) \left( x G + x x G (1+xG)\right)$,
which combined with relation \eqref{L} gives
\begin{equation}\label{eqG}
G = x(1+G) +x^2(1+xG)(1 + x(1+xG))G. 
\end{equation}

Finally, from relations \eqref{eqA}, \eqref{eqG}, \eqref{eqB}, we deduce that 

\begin{equation}\label{eqA2}
H = \frac{1-x + x(1-2x)G)}{(1-x)^2 + (x-2)x^2G}.
\end{equation}

Next, we deal with the second form of paths 
$p^\prime = \alpha (\Us\Ds)^r \Us \beta$ described in Proposition \ref{p2Dyck}.

For the paths $\alpha$, we define the set
$\E$ of Dyck paths with no occurrence of $\Ds^3$ 
and with no occurrence of $\Ds^2$ followed by an occurrence of $\Us^2\Ds^2$
and we denote by $H_1$, $B_1$, $G_1$
the generating functions of the sets
$\E \cap \K$, $B \cap \E \cap \K$, $\G \cap \E \cap \K$
respectively.

Then, we proceed to the enumeration of the set $\K \setminus \E$ consisting of
the above mentioned paths $\alpha$, with corresponding generating function
$K - H_1$.

Every path $p \in \E \cap \K$ is decomposed as
\[
p = (\Us\Ds)^s\Us \beta \Ds, 
\qquad \text{or} \qquad
p = \delta (\Us\Ds)^{s+1} \Us \gamma \Ds,
\]
where $s \geq 0$,
$\beta \in \left(\B \cap \E\right) \setminus \K$,
$\delta \in \E \cap \K$, $\gamma \in \G \cap \E \setminus (\K \cup \{\Us\Ds\})$.

Moreover, since every path $\beta \in \left(\B \cap \E\right) \setminus \K$
(resp. $\gamma \in \left(\G \cap \E\right) \setminus (\K \cup \{\Us\Ds\})$)
has either one of the forms
\[
(\Us\Ds)^t \qquad \text{or} \qquad \sigma (\Us\Ds)^{t+1},
\]
where $t \geq 1$ and $\sigma \in \B \cap \E \cap \K$
(resp. $\sigma \in \G \cap \E \cap \K$),
we obtain that the set $\left(\B \cap \E\right) \setminus \K$
(resp. $\left(\G \cap \E\right) \setminus \K$) 
is enumerated by $\frac{x}{1-x}(1+xB_1)$ (resp. $\frac{x}{1-x}(1+xG_1)$).

Thus, we obtain that 
\[
H_1 = \frac{x}{1-x} \frac{x(1+xB_1)}{1-x} + H_1 \frac{x^2}{1-x} 
\left(\frac{x(1+xG_1)}{1-x} -x\right),
\]
which gives 
\begin{equation}\label{eqA_1} 
H_1 = \frac{x^2\left( 1+xB_1 + x^2 H_1 (1+G_1) \right)}{(1-x)^2}.
\end{equation}

Similarly, every path $p \in \G \cap \E \cap \K$, is decomposed as
\[
p = (\Us\Ds)^s\Us \gamma \Ds, 
\qquad \text{or} \qquad
p = \delta (\Us\Ds)^{s+1} \Us \gamma \Ds,
\]
where $s \geq 0$,
$\delta \in \G \cap \E \cap \K$,
$\gamma \in \left(\G_{\Us\Ds\Us} \cap \E\right) \setminus \K$ or
$\gamma = \Us \gamma_1 \Ds \gamma_2 \Ds$, 
$\gamma_1, \gamma_2 \in \left( \G_{\Us\Ds\Us} \cap \E\right) \setminus \K$.

As before, we find that the set $\left( \G_{\Us\Ds\Us} \cap \E\right) \setminus \K$ 
is enumerated by $\frac{x^2}{1-x}(1+xG_1)$.

Hence,
\begin{align}\label{eqG_1} 
G_1  &= \left( \frac{1}{1-x} + \frac{x}{1-x}G_1 \right) x 
\left( \frac{x^2}{1-x}(1+xG_1) + \frac{x^5}{(1-x)^2}(1+xG_1)^2\right) \nonumber
\\&
=\frac{x^3}{(1-x)^2}(1+xG_1)^2 \left( 1+ \frac{x^3}{1-x}(1+xG_1)\right).
\end{align}

Every path $p \in (\B \setminus \G) \cap \E \cap \K$ is decomposed as
\[
p = (\Us\Ds)^s \Us^2\Ds^2 \gamma, 
\qquad s \geq 0, \quad \gamma \in \left(\G_{\Us\Ds\Us} \cap \E \cap \K\right) \cup \{\varepsilon\}.
\]

Hence,  
\begin{equation}\label{eqB_1} 
B_1 - G_1  = \frac{x^2}{1-x}(1+xG_1).
\end{equation}

From relations \eqref{eqA_1}, \eqref{eqG_1} and \eqref{eqB_1}, 
setting $T = B_1 - G_1$, 
we obtain
\begin{equation}\label{eqD_1} 
x^3 T^3 + x^2 T^2 + (x-1)T +x^2 = 0
\end{equation}
and
\begin{equation}\label{eqAD_1} 
H_1 = \frac{(1-x+x^3) T}{(1-x)(1-x+x^3-xT)}.
\end{equation}

Finally, the paths $\beta$ of the second form described in Proposition \ref{uduDyck} 
are decomposed as
\[
\beta = \Us^2 \Ds \gamma_1 \Ds \gamma_2, \qquad
\gamma_1 \in \G, \gamma_2 \in \G_{\Us\Ds\Us}\cup \{ \varepsilon \}, 
\]
and hence they are enumerated by the generating function 
$x^2 G (1+xG)$.

In conclusion, the generating function $F$ is given by the equality
\[
F = H + \frac{x^2}{1-x} \left( K - H_1 \right) x^2 G (1+xG),
\]
which, combined with relations \eqref{K}, \eqref{eqA2} and \eqref{eqAD_1}, 
gives the required result.
\end{proof}

\end{document}